\providecommand{\tabularnewline}{\\}
\numberwithin{equation}{section}
\numberwithin{figure}{section}
\theoremstyle{plain}
\newtheorem{thm}{\protect\theoremname}[section]
  \theoremstyle{plain}
  \newtheorem{conjecture}[thm]{\protect\conjecturename}
  \theoremstyle{plain}
  \newtheorem{cor}[thm]{\protect\corollaryname}
  \theoremstyle{definition}
  \newtheorem{example}[thm]{\protect\examplename}
  \theoremstyle{plain}
  \newtheorem{lem}[thm]{\protect\lemmaname}
  \theoremstyle{plain}
  \newtheorem{prop}[thm]{\protect\propositionname}
\definecolor{lightgray}{gray}{0.95}
\definecolor{sortagray}{gray}{0.85}
\definecolor{darkergray}{gray}{0.75}
  \providecommand{\conjecturename}{Conjecture}
  \providecommand{\corollaryname}{Corollary}
  \providecommand{\examplename}{Example}
  \providecommand{\lemmaname}{Lemma}
  \providecommand{\propositionname}{Proposition}
\providecommand{\theoremname}{Theorem}
\begin{document}

\title{On the linear span of lattice points in a parallelepiped}
\begin{abstract}
\global\long\def\ZZ{\mathbf{Z}}
\global\long\def\NN{\mathbf{N}}
\global\long\def\RR{\mathbf{R}}
\global\long\def\CC{\mathbf{C}}
\global\long\def\Hom{\mathrm{Hom}}
\global\long\def\supp{\mathrm{supp}}
\global\long\def\gauss{\tau}
\global\long\def\sgn{\mathrm{sgn}}
\global\long\def\projmapspart{\mathscr{I}}
\global\long\def\one{\mathbf{1}}
\global\long\def\spann{\mathrm{span}}
\global\long\def\zeroprofilepart{\mathscr{L}}
\global\long\def\Ltwoodd{L_{\mathrm{odd}}^{2}}
\global\long\def\divisor{\mathrm{d}}
Let $\Lambda\subset\mathbf{R}^{n}$ be a lattice which contains the
integer lattice $\mathbf{Z}^{n}$. We characterize the space of linear
functions $\mathbf{R}^{n}\rightarrow\mathbf{R}$ which vanish on the
lattice points of $\Lambda$ lying in the half-open unit cube $[0,1)^{n}$.
We also find an explicit formula for the dimension of the linear span
of $\Lambda\cap[0,1)^{n}$. The results in this paper generalize and
are based on the Terminal Lemma of Reid, which is in turn based upon
earlier work of Morrison and Stevens on the classification of four
dimensional isolated Gorenstein terminal cyclic quotient singularities.
\end{abstract}

\author{Marcel Celaya}

\address{Department of Mathematics, Georgia Institute of Technology}

\email{mcelaya@gatech.edu}

\subjclass[2000]{52B20, 52B05, 11M20}

\thanks{Research supported in part by the Natural Sciences and Engineering
Research Council of Canada}
\maketitle

\section{Introduction}

Let $n$ be a positive integer and let $\Lambda$ denote a lattice
in $\RR^{n}$ that contains the integer lattice $\ZZ^{n}$. We are
interested in understanding the combinatorics of the lattice points
of $\Lambda$ inside the half-open cube 
\[
[0,1)^{n}:=\left\{ \left(x_{1},\ldots,x_{n}\right)\in\RR^{n}:\;0\leq x_{i}<1\mbox{ for all \ensuremath{i=1,2,\ldots,n}}\right\} .
\]
In general, questions about these points are difficult. For instance,
if $\Lambda=\tfrac{1}{2}\ZZ^{n}$ and $u=\left(u_{1},\ldots,u_{n}\right)\in\RR^{n}$
has integral coordinates, then the problem of deciding if there exists
a nonzero point in $\Lambda\cap[0,1)^{n}$ on the hyperplane $\left\{ x\in\RR^{n}:\left\langle u,x\right\rangle =0\right\} $
is NP-complete. Indeed, it is straightforward to reduce \textsc{subset-sum}
to this problem; such a point exists if and only if some integers
in the multiset $\left\{ u_{1},\ldots,u_{n}\right\} $ sum to zero.
As pointed out by Seb\H{o} in \cite[p. 401]{sebho_introduction_1999},
the well-known \emph{Lonely Runner Conjecture \cite{bienia_flows_1998}}
can be stated as a problem about the existence of a lattice point
in $\Lambda\cap[0,1)^{n}$ satisying certain linear inequalities where
the lattice $\Lambda$ is generated by $\ZZ^{n}$ plus a rational
vector $v\in\RR^{n}$ encoding the speeds of the runners.

Our approach to understanding the lattice points in $\Lambda\cap[0,1)^{n}$
begins with a result that is commonly attributed to G. K. White \cite{white_lattice_1964}
but was discovered independently by several others \cite{reeve_volume_1957,reznick_lattice_1986}.
It says that a tetrahedron $T$ in $\RR^{3}$ which has integral vertices
but no other integral points must be ``sandwiched'' between two
parallel lattice hyperplanes. More precisely, there exists an integral
normal vector $u=\left(u_{1},u_{2},u_{3}\right)$ and an integer $\delta$
such that two of the vertices of $T$ lie on the plane $\left\langle u,x\right\rangle =\delta$
and the other two lie on the plane $\left\langle u,x\right\rangle =\delta+1$.
We may assume that one of the vertices of the tetrahedron lies at
the origin, so that there are three integral vectors $v,v',v''$ corresponding
to the three edges of the tetrahedron incident to the origin. The
tricky part of White's theorem is to show that, after applying a unimodular
transformation (i.e. a linear transformation of $\RR^{n}$ which fixes
$\ZZ^{n}$), we may further assume that $v=(1,0,0)$, $v'=(0,1,0)$,
and $v''=(1,a,r)$ where $a$ and $r$ are positive integers such
that $a<r$ and $a$ is coprime to $r$. From there, the normal vector
$u=(1,0,0)$ establishes the conclusion of the theorem.

The triples $\lambda=(\lambda_{1},\lambda_{2},\lambda_{3})\in\RR^{3}$
such that $\lambda_{1}v+\lambda_{2}v'+\lambda_{3}v''\in\ZZ^{3}$ form
a lattice $\Lambda\subset\RR^{3}$ which contains the integer lattice
$\ZZ^{3}$. Moreover, $T$ contains a non-vertex integral point if
and only if there exists some nonzero $\lambda\in\Lambda\cap[0,1)^{3}$
such that $\lambda_{1}+\lambda_{2}+\lambda_{3}\leq1$. Indeed, such
a $\lambda$ corresponds to a proper convex combination of at least
two vertices of $T$. A short exercise shows that there are exactly
$r$ lattice points in $\Lambda\cap[0,1)^{3}$ and they are of the
form $(\{ka/r\},\{-ka/r\},\{k/r\})$ for $k=0,1,2,\ldots,d-1$. Here
$\{x\}$ denotes the fractional part of the real number $x$, the
unique real number in $[0,1)$ congruent to $x$ $\mathrm{mod}\;1$.
In particular, we can view the emptiness of $T$ as a consequence
of the fact that the first two components of every nonzero $\lambda\in\Lambda\cap[0,1)^{3}$
sum to 1 and therefore the sum $\lambda_{1}+\lambda_{2}+\lambda_{3}$
exceeds 1.

More generally, if $\Lambda\subset\RR^{n}$ is a lattice that contains
$\ZZ^{n}$, then we can think of the presence of such complementary
pairs of coordinates as a restriction on the extent to which the nonzero
points in $\Lambda\cap[0,1)^{n}$ can deviate from the hyperplane
$x_{1}+\cdots+x_{n}=n/2$. Seb\H{o} asks in \cite{sebho_introduction_1999}
about the most restrictive case, where \emph{all} the nonzero lattice
points in $\Lambda\cap[0,1)^{n}$ lie on this hyperplane. He conjectures
that this can only happen if the coordinates can be grouped into $n/2$
pairs of complementary coordinates as above. More precisely, suppose
$\Lambda$ is a lattice in $\RR^{n}$ generated by $\ZZ^{n}$ and
the point $\tfrac{1}{r}\left(a_{1},\ldots,a_{n}\right)$, where the
$a_{i}$'s are positive integers coprime to a positive integer $r$.
Note that for every $\left(\lambda_{1},\ldots,\lambda_{n}\right)\in\Lambda\cap[0,1)^{n}$,
there exists an integer $0\leq k<r$ such that $\lambda_{i}=\{ka_{i}/r\}$
for each $i=1,2,\ldots,n$. Seb\H{o} asks if the following statement
is true:
\begin{conjecture}
\label{conj:sebo_conjecture}The equality
\[
\lambda_{1}+\lambda_{2}+\cdots+\lambda_{n}=n/2
\]
holds for all nonzero $\lambda\in\Lambda\cap[0,1)^{n}$ if and only
if $n$ is even and (after possibly reordering) $a_{i}+a_{i+1}=r$
for $i=1,3,5,\ldots,n-1$. 
\end{conjecture}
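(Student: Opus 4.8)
The plan is to treat the two implications separately. The forward (sufficiency) direction is a short computation: if $n$ is even and, after reordering, $a_{2t-1}+a_{2t}=r$ for $t=1,\dots,n/2$, then $a_{2t}\equiv-a_{2t-1}\pmod r$, so for every $k\in\{1,\dots,r-1\}$ one has $r\nmid ka_{2t-1}$ (because $\gcd(a_{2t-1},r)=1$) and hence $\{ka_{2t}/r\}=\{-ka_{2t-1}/r\}=1-\{ka_{2t-1}/r\}$; summing the $n/2$ pairs gives $\lambda_1+\dots+\lambda_n=n/2$ for every nonzero $\lambda\in\Lambda\cap[0,1)^n$. The content is in the converse.

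Assume, then, that $\lambda_1+\dots+\lambda_n=n/2$ for every nonzero $\lambda\in\Lambda\cap[0,1)^n$. After handling the small cases $r\le2$ directly, normalize so that $0<a_i<r$ (subtracting multiples of $r$ from the $a_i$ affects neither $\Lambda$ nor the numbers $\{ka_i/r\}$), and write $s(k):=\sum_{i=1}^n\{ka_i/r\}$, so the hypothesis says $s(k)=n/2$ for $k=1,\dots,r-1$. Since $\{t/r\}+\{-t/r\}=1$ whenever $r\nmid t$, we get $s(k)+s(-k)=n$ for every $k\not\equiv0$, so the hypothesis is equivalent to the condition that $s(k)=s(-k)$ for all $k$. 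Now form the auxiliary lattice $\Lambda'\subset\RR^{2n}$ generated by $\ZZ^{2n}$ and the point $\tfrac1r(a_1,\dots,a_n,r-a_1,\dots,r-a_n)$; its points in $[0,1)^{2n}$ are exactly the vectors $(\{ka_1/r\},\dots,\{ka_n/r\},\{-ka_1/r\},\dots,\{-ka_n/r\})$ for $k=0,1,\dots,r-1$. The condition $s(k)=s(-k)$ for all $k$ is then precisely the assertion that the linear functional $\ell(x):=\sum_{i=1}^nx_i-\sum_{i=1}^nx_{n+i}$ vanishes on $\Lambda'\cap[0,1)^{2n}$.

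I would now apply the characterization of the linear functionals vanishing on $\Lambda'\cap[0,1)^{2n}$ --- the main result of this paper --- to this particular $\ell$ and cyclic generator. Concretely, grouping indices by residue via $c_j:=\#\{i:a_i\equiv j\pmod r\}$, the vanishing of $\ell$ unwinds to $\sum_j(c_j-c_{r-j})((kj/r))=0$ for all $k$, where $((x))=\{x\}-\tfrac12$ is the sawtooth function (one subtracts off the constant $\tfrac12$ using $\sum_j(c_j-c_{r-j})=0$). The force of the characterization is that, since $(c_j-c_{r-j})_j$ is supported on residues prime to $r$, this can hold only when $c_j=c_{r-j}$ for every $j$; that is, the multiset $\{a_1,\dots,a_n\}$ of residues mod $r$ is invariant under $x\mapsto r-x$. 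As each $a_i$ is prime to $r$ and $r\ge3$, no $a_i$ is fixed by this involution, so the residues break into pairs $\{a_i,a_j\}$ with $a_i+a_j\equiv0\pmod r$; since $0<a_i,a_j<r$, each such pair satisfies $a_i+a_j=r$. In particular $n$ is even, and reordering yields the stated description.

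The step I expect to be the real obstacle is the rigidity just invoked: that $\sum_j e_j((kj/r))=0$ for all $k$, with $(e_j)$ supported on residues prime to $r$ and satisfying $e_{r-j}=-e_j$, forces $e\equiv0$. For $r$ prime this comes out by diagonalizing the sawtooth matrix over the characters of $(\ZZ/r)^{\times}$ and using that $B_{1,\chi}\ne0$ for every odd $\chi$. For composite $r$ the unit group need not be cyclic, and --- more to the point --- the functions $k\mapsto\{kc_i/r\}$ no longer decouple, since $k$ may share a factor with $r$; the analysis then has to be carried out divisor by divisor of $r$. This is precisely the generalization of Reid's Terminal Lemma established in the body of the paper, and granted it the deduction above is routine.
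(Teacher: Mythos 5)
Your proposal is correct and follows essentially the same route as the paper: the paper likewise doubles the lattice into $\RR^{2n}$, applies the generalized Terminal Lemma (Theorem \ref{thm:main-thm}) to the functional $(1,\ldots,1,-1,\ldots,-1)$ to conclude that the multiset of coordinate projections is symmetric under negation, and then pairs up coordinates into an involution (Corollary \ref{thm:semistable} together with the example specializing it to the cyclic coprime case). Your explicit attention to the degenerate cases $r\le 2$, where a residue can be fixed by $x\mapsto r-x$ and the literal statement needs care for odd $n$, is if anything slightly more careful than the paper's treatment.
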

In \cite{sebho_introduction_1999}, Seb\H{o} proves the case $n=4$
of his conjecture and uses it to deduce White's theorem.

It turns out, however, that Seb\H{o}'s conjecture had already been
established some years earlier by Morrison and Stevens in their paper
\cite{morrison_terminal_1984} (see also \cite{batyrev_generalization_2010}).
Although they were also primarily interested in the case $n=4$, their
proof stands out as it easily extends to all positive even integers
$n$. In \cite{morrison_terminal_1984}, Morrison and Stevens use
this result to derive a complete classification of three dimensional
isolated terminal cyclic quotient singularities and four dimensional
isolated Gorenstein terminal cyclic quotient singularities. The survey
paper of Borisov \cite{borisov_quotient_2008} provides a nice description
and some interesting number-theoretic applications of this problem. 

In \cite[Theorem 5.4]{reid_young_1985}, Reid proves a stronger version
of Conjecture \ref{conj:sebo_conjecture} that does not require the
$a_{i}$'s to be coprime to $r$. Given a lattice $\Lambda\subset\RR^{n}$
generated by $\ZZ^{n}$ and a point $\tfrac{1}{r}(a_{1},\ldots,a_{n})$
where the $a_{i}$'s are positive integers less then $r$, he finds
a characterization for when all lattice points in $\Lambda\cap[0,1)^{n}$
lie in a given hyperplane through the origin. We show in Section \ref{sec:Main-theorem}
how to deduce Seb\H{o}'s conjecture from Reid's result, known as
the \emph{Terminal Lemma}. In \cite[Section 6]{reid_young_1985},
Reid shows how the Terminal Lemma can be systematically applied to
obtain Mori's classification results on three dimensional terminal
singularities found in \cite{mori_$3$-dimensional_1985}.

Other variations of Conjecture \ref{conj:sebo_conjecture} have found
application in Ehrhart theory; in particular the problem of classifying
lattice polytopes with a given $h^{*}$-polynomial. In \cite{batyrev_lattice_2013},
Batyrev and Hofscheier give a classification of all lattice polytopes
whose $h^{*}$-polynomial is of the form $h^{*}\left(t\right)=1+ct^{k}$
for some positive integers $c,k$ in terms of particular linear codes.
This work was further developed by Higashitani, Nill, and Tsuchiya
in \cite{higashitani_gorenstein_2015} in order to obtain a combinatorial
description of Gorenstein polytopes with a trinomial $h^{*}$-polynomial.
A key ingredient of these results is a version of Conjecture \ref{conj:sebo_conjecture}
applicable to lattices $\Lambda\subset\RR^{n}$ containing $\ZZ^{n}$
with the property that the quotient group $\Lambda/\ZZ^{n}$ is isomorphic
to the additive group of a finite field.

In this paper, we establish of a variation of Conjecture \ref{conj:sebo_conjecture}
which imposes no restrictions on the lattice $\Lambda\subset\RR^{n}$
except that it must contain the integer lattice $\ZZ^{n}$. The paper
is organized into six sections. Following the introduction, Section
\ref{sec:Background-and-notation} outlines the basic notation and
concepts we use. In Section \ref{sec:Main-theorem}, we state our
main theorem, which directly generalizes Reid's Terminal Lemma by
dropping the assumption $\Lambda/\ZZ^{n}$ must be cyclic. From our
theorem we deduce a formula for the dimension of the linear span of
the points in $\Lambda\cap[0,1)^{n}$. We also state a natural generalization
of Conjecture \ref{conj:sebo_conjecture} when there are no assumptions
on the group structure of $\Lambda/\ZZ^{n}$. Finally, we state the
two main technical tools needed to prove our main theorem. Section
\ref{sec:Proof-of-Theorem-Main} outlines the proof of our main theorem
using these two tools, both of which are statements about an arbitrary
additive finite abelian group $G$. Section \ref{sec:Proof-of-First-Technical-Tool}
contains the proof of the first technical tool, which asserts that
a specific collection of indicator functions defined on $G$ is linearly
independent. In Section \ref{sec:Proof-of-Second-Technical-Tool}
is the proof of the second technical tool, which gives a specific
spanning set for the space of functions $f:G\rightarrow\CC$ satisfying
$f(-a)=-f(a)$ for all $a\in G$. At a high level, we mostly follow
the path laid out by Reid in \cite{reid_young_1985}. We differ somewhat
in the details, however, by making liberal use of the results in \cite[Section 9.2]{montgomery_multiplicative_2006}.

\section{\label{sec:Background-and-notation}Background and notation}

\subsection{Notation}

For $u=(u_{1},\ldots,u_{n})\in\RR^{n}$ and $v=(v_{1},\ldots,v_{n})\in\RR^{n}$,
we let $\left\langle u,v\right\rangle =u_{1}v_{1}+\cdots+u_{n}v_{n}$
denote the usual dot product. If $v$ is a vector in a vector space
with specified coordinates, then $\supp\left(v\right)$ denotes the
set of coordinates $i$ for which $v_{i}\neq0$. For $x\in\RR$, we
define $\left\{ x\right\} $ to be be the unique real number in the
half-open interval $[0,1)$ in which $x-\left\{ x\right\} $ is an
integer. We frequently make use of the fact that for any $x\in\RR$,
$\{x\}+\{1-x\}$ equals $1$ if $x\notin\ZZ$ and 0 otherwise. We
define the first periodic Bernoulli function $B_{1}:\RR\rightarrow\RR$
by
\[
B_{1}\left(x\right):=\begin{cases}
\left\{ x\right\} -1/2, & x\notin\ZZ\\
0, & x\in\ZZ.
\end{cases}
\]
For $x+\ZZ\in\RR/\ZZ$, we also define $B_{1}\left(x+\ZZ\right):=B_{1}\left(x\right)$. 

For a finite group $G$, we denote the space of complex functions
$f:G\rightarrow\CC$ by $L^{2}(G)$ which forms a vector space under
pointwise addition and comes with the inner product
\[
\left\langle f,h\right\rangle =\frac{1}{\left|G\right|}\sum_{g\in G}f(g)\overline{h(g)}.
\]

\subsection{Character theory of finite abelian groups}

We refer the reader to \cite{conrad2010characters} and \cite{babai_fourier_1989}
for an introduction to the character theory of finite abelian groups,
and record some key facts here. For a finite abelian group $G$, let
$\widehat{G}$ denote the multiplicative group of homomorphisms $G\rightarrow\CC^{\times}$
from $G$ to the nonzero complex numbers. The group operation of $\widehat{G}$
is given by pointwise multiplication: $\left(\chi\psi\right)\left(g\right):=\chi\left(g\right)\psi\left(g\right)$
for each $g\in G$ and for each $\chi,\psi\in\widehat{G}$. The inverse
of $\chi\in\widehat{G}$ satisfies $\chi^{-1}(g)=\overline{\chi(g)}$
for all $g\in G$; we therefore denote $\chi^{-1}$ by $\overline{\chi}$.
Elements in $\widehat{G}$ are called \emph{characters} of $G$. There
is an isomorphism $G\simeq\widehat{G}$ and we identify $G$ with
$\widehat{\widehat{\mbox{\textit{G}}}}$ via the natural isomorphism
which takes $g\in G$ to the point evaluation map $\left(\chi\mapsto\chi(g)\right)\in\widehat{\widehat{\mbox{\textit{G}}}}$.
For a subgroup $K$ of $G$, let 
\[
K^{\perp}:=\{\chi\in\widehat{G}:\chi\left(k\right)=1\;\mbox{for all \ensuremath{k\in K}}\}
\]
which is a subgroup of $\widehat{G}$. With the above identification
of $G$ and $\widehat{\widehat{\mbox{\textit{G}}}}$, we have $K^{\perp\perp}:=(K^{\perp})^{\perp}=K$. 

We define $e:\RR/\ZZ\rightarrow\CC^{\times}$ to be the injective
group homomorphism $x+\ZZ\mapsto\exp\left(2\pi ix\right)$. Since
the additive group $\RR/\ZZ$ embeds into the multiplicative group
$\CC^{\times}$ of nonzero complex numbers via the map $x+\ZZ\mapsto e\left(x\right)$,
it follows that the additive group $H:=\Hom_{\ZZ}\left(G,\RR/\ZZ\right)$
is isomorphic to the multiplicative group $\widehat{G}$ via the map
$\phi\mapsto e\circ\phi$. In this paper it will typically be more
convenient to state results and proofs in terms of $H$ rather than
$\widehat{G}$. However, we will sometimes take advantage of both
the multiplicative and additive structure offered by $\CC$ and work
with $\widehat{G}$ instead.

\section{\label{sec:Main-theorem}Overview of results}

We begin with our generalization of Reid's Terminal Lemma as claimed
in the abstract. Let $\Lambda\subset\RR^{n}$ be a lattice containing
$\ZZ^{n}$. For $i=1,2,\ldots,n$, let $\pi_{i}:\Lambda/\ZZ^{n}\rightarrow\RR/\ZZ$
denote the coordinate projection map sending $\left(\lambda_{1},\ldots,\lambda_{n}\right)+\ZZ^{n}$
to $\lambda_{i}+\ZZ$. Observe that these maps are homomorphisms in
the additive group $\Hom_{\ZZ}\left(\Lambda/\ZZ^{n},\RR/\ZZ\right)$
under pointwise addition; thus, it makes sense to talk about $-\pi_{i}$
for each $i$. By restricting to the appropriate subspace of $\RR^{n}$,
we assume without loss of generality that $\ker\pi_{i}\neq\Lambda/\ZZ^{n}$
for any $i$.
\begin{thm}[{Terminal Lemma, c.f. \cite[Theorem 5.4]{reid_young_1985}}]
\label{thm:main-thm}Let $u=(u_{1},\ldots,u_{n})\in\RR^{n}$. Then
$\left\langle u,\lambda\right\rangle =0$ for every $\lambda\in\Lambda\cap[0,1)^{n}$
if and only if
\begin{equation}
\sum_{\substack{i=1\\
\pi_{i}=\pi_{j}
}
}^{n}u_{i}=\sum_{\substack{i=1\\
\pi_{i}=-\pi_{j}
}
}^{n}u_{i}\label{eq:thm1_group_into_pairs}
\end{equation}
and
\begin{equation}
\sum_{\substack{i=1\\
\ker\pi_{i}=\ker\pi_{j}
}
}^{n}u_{i}=0\label{eq:thm1_kernel_sum_to_zero}
\end{equation}
for each $j=1,2,\ldots,n$.
\end{thm}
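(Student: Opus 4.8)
The plan is to convert the statement into a question about the finite abelian group $G:=\Lambda/\ZZ^{n}$ and the homomorphisms $\pi_{1},\dots,\pi_{n}\in H:=\Hom_{\ZZ}(G,\RR/\ZZ)$. The map $\lambda\mapsto\lambda+\ZZ^{n}$ is a bijection from $\Lambda\cap[0,1)^{n}$ onto $G$, and its inverse sends $g$ to the point whose $i$-th coordinate is $\{\pi_{i}(g)\}$, where for $t+\ZZ\in\RR/\ZZ$ I write $\{t+\ZZ\}$ for the unique representative of $t+\ZZ$ in $[0,1)$. So the condition on $u$ becomes $\sum_{i}u_{i}\{\pi_{i}(g)\}=0$ for all $g\in G$. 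I would then use the identity $\{t\}=B_{1}(t)+\tfrac12\,\mathbf{1}[t\neq0]$ on $\RR/\ZZ$ to write $F(g):=\sum_{i}u_{i}\{\pi_{i}(g)\}=O(g)+\tfrac12E(g)$, with $O(g):=\sum_{i}u_{i}B_{1}(\pi_{i}(g))$ and $E(g):=\sum_{i}u_{i}\mathbf{1}[\pi_{i}(g)\neq0]$. Since $B_{1}$ is odd and $t\mapsto\mathbf{1}[t\neq0]$ is even on $\RR/\ZZ$, the functions $O$ and $\tfrac12E$ are respectively the odd and even parts of $F$, so $F\equiv0$ on $G$ if and only if $O\equiv0$ and $E\equiv0$. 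It remains to match $O\equiv0$ with \eqref{eq:thm1_group_into_pairs} and $E\equiv0$ with \eqref{eq:thm1_kernel_sum_to_zero}.

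For $O$, set $\beta(\phi):=\sum_{i:\pi_{i}=\phi}u_{i}$ for $\phi\in H$, so $O=\sum_{\phi\in H}\beta(\phi)\,(B_{1}\circ\phi)$, and note that \eqref{eq:thm1_group_into_pairs}, ranging over all $j$, says precisely that $\beta$ is even ($\beta(-\phi)=\beta(\phi)$ for all $\phi\in H$). The key elementary facts are $B_{1}\circ(-\phi)=-(B_{1}\circ\phi)$ and $B_{1}\circ\phi\equiv0$ whenever $2\phi=0$ (for then $\phi(g)\in\{0,\tfrac12\}$ and $B_{1}(0)=B_{1}(\tfrac12)=0$). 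Together these give the ``if'' direction at once: for even $\beta$, pairing $\phi$ with $-\phi$ collapses $O$ to $0$. For the converse I would split $\beta=\beta_{\mathrm{ev}}+\beta_{\mathrm{od}}$; the even part drops out of $O$, so $O\equiv0$ yields $\sum_{\phi}\beta_{\mathrm{od}}(\phi)\,(B_{1}\circ\phi)=0$, which after pairing becomes a vanishing linear combination of the functions $B_{1}\circ\phi$ as $\phi$ runs over one representative of each pair $\{\phi,-\phi\}$ with $2\phi\neq0$. Here I would invoke the second technical tool, a spanning-set statement for the odd functions $G\to\CC$: that space has dimension $(|G|-|G[2]|)/2$, which is exactly the number of such pairs, so those $B_{1}\circ\phi$ are a basis and in particular linearly independent, forcing $\beta_{\mathrm{od}}\equiv0$ (it already vanishes on the $2$-torsion of $H$). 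Hence $\beta$ is even, that is, \eqref{eq:thm1_group_into_pairs} holds.

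For $E$, since $\mathbf{1}[\pi_{i}(g)\neq0]=\mathbf{1}[g\notin\ker\pi_{i}]$, grouping by kernels gives $E(g)=\sum_{K}c_{K}\mathbf{1}[g\notin K]$, where $K$ runs over the distinct kernels among $\ker\pi_{1},\dots,\ker\pi_{n}$ --- all proper subgroups of $G$ by the normalization made before the theorem statement --- and $c_{K}:=\sum_{i:\ker\pi_{i}=K}u_{i}$. If every $c_{K}$ vanishes then $E\equiv0$; conversely $E\equiv0$ forces every $c_{K}=0$, and this is where I would apply the first technical tool, the linear independence of the indicator functions attached to these subgroups. Since $c_{K}=0$ for $K=\ker\pi_{j}$ is exactly \eqref{eq:thm1_kernel_sum_to_zero} for that $j$, the proof is complete. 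The only steps with real content are the two applications of the technical tools, both occurring in the ``only if'' direction; everything else is the even--odd bookkeeping above. I expect the main obstacle to lie precisely there: arranging the two group-theoretic lemmas so that the relevant families (the functions $B_{1}\circ\phi$, and the subgroup indicators) are genuinely a basis and genuinely independent, respectively.
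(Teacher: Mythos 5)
Your proposal is correct and follows essentially the same route as the paper: the same decomposition of $g\mapsto\sum_{i}u_{i}\{\pi_{i}(g)\}$ into an odd part and an even part, with the odd part handled by the spanning theorem for the functions $S_{g}$ and the even part by the linear independence of the subgroup indicator functions. The only notable (but cosmetic) difference is that you apply the second tool in its dual form---linear independence of the functions $B_{1}\circ\phi$ on $G$, deduced from spanning together with the dimension count $(|G|-|G[2]|)/2$---whereas the paper keeps $h_{u}$ in $L^{2}(H)$ and argues that a function orthogonal to all of $L^{2}_{\mathrm{odd}}(H)$ yet itself odd must vanish; both are valid consequences of Theorem \ref{thm:dimensionMS}.
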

From this theorem several corollaries can be deduced. The first shows
that the dimension of the span of the lattice points of $\Lambda$
in the half-open unit cube $[0,1)^{n}$ can be computed explicitly
in terms of the coordinate projection functions $\pi_{i}:\Lambda/\ZZ^{n}\rightarrow\RR/\ZZ$.

Let $\mathscr{I}$ denote the equivalence classes of the equivalence
relation on the coordinates $\{1,2,\ldots,n\}$ in which $i\sim j$
in $\mathscr{I}$ if and only if $\pi_{i}=\pi_{j}$ or $\pi_{i}=-\pi_{j}$.
Let $\mathscr{K}$ denote the equivalence classes of the equivalence
relation on the coordinates $\{1,2,\ldots,n\}$ where $i\sim j$ in
$\mathscr{K}$ if and only if $\ker(\pi_{i})=\ker(\pi_{j})$. Note
that $\mathscr{K}$ coarsens $\mathscr{I}$ since $\ker(\pi_{i})=\ker(-\pi_{i})$
for all $i$.
\begin{cor}
\label{cor:The-dimension-of-span}The dimension of $\spann(\Lambda\cap[0,1)^{n})$
is equal to $\iota+\kappa$, where $\iota$ denotes the number of
equivalence classes $[i]\in\mathscr{I}$ such that $\pi_{i}\neq-\pi_{i}$
and $\kappa$ denotes the number of equivalence classes in $\mathscr{K}$
containing coordinates $i,j$ (possibly equal) in which $\pi_{i}=-\pi_{j}$.
\end{cor}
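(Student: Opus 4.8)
The plan is to pass to the orthogonal complement. For any subset $X\subseteq\RR^{n}$ one has $\dim\spann(X)=n-\dim X^{\perp}$, where $X^{\perp}=\{u\in\RR^{n}:\langle u,x\rangle=0\text{ for all }x\in X\}$; so taking $X=\Lambda\cap[0,1)^{n}$ and invoking Theorem \ref{thm:main-thm}, it is enough to compute the dimension of the solution space $U\subseteq\RR^{n}$ of the homogeneous linear system consisting of \eqref{eq:thm1_group_into_pairs} and \eqref{eq:thm1_kernel_sum_to_zero} for $j=1,\dots,n$, and to check that $\dim U=n-\iota-\kappa$. Each equation of this system involves only the coordinates $u_{i}$ with $i$ in a single class of $\mathscr{K}$: this is immediate for \eqref{eq:thm1_kernel_sum_to_zero}, while in \eqref{eq:thm1_group_into_pairs} the conditions $\pi_{i}=\pm\pi_{j}$ force $\ker\pi_{i}=\ker\pi_{j}$. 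Hence $U$ decomposes as the direct sum $\bigoplus_{C\in\mathscr{K}}U_{C}$, with $U_{C}\subseteq\RR^{C}$ cut out by the subsystem supported on $C$. Writing $\iota_{C}$ for the number of $\mathscr{I}$-classes contained in $C$ on which $\pi_{i}\neq-\pi_{i}$, and $\kappa_{C}\in\{0,1\}$ for the indicator of whether $C$ contains indices $i,j$ with $\pi_{i}=-\pi_{j}$, one has $\iota=\sum_{C}\iota_{C}$ and $\kappa=\sum_{C}\kappa_{C}$ since $\mathscr{K}$ coarsens $\mathscr{I}$, so it suffices to show $\dim U_{C}=|C|-\iota_{C}-\kappa_{C}$ for each $C\in\mathscr{K}$.

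Fix $C$, let $K=\ker\pi_{i}$ be the common kernel for $i\in C$, and note that $(\Lambda/\ZZ^{n})/K$, embedded in $\RR/\ZZ$ via $\pi_{i}$, is cyclic of some order $d$; here $d\geq2$ because $K\neq\Lambda/\ZZ^{n}$ by the standing hypothesis recorded before Theorem \ref{thm:main-thm}. Choosing an isomorphism $(\Lambda/\ZZ^{n})/K\cong\ZZ/d$ identifies each $\pi_{i}$ ($i\in C$) with multiplication by $a_{i}/d$ for a unique residue $a_{i}$ coprime to $d$, with $\pi_{i}=\pi_{j}$ iff $a_{i}\equiv a_{j}$ and $\pi_{i}=-\pi_{j}$ iff $a_{i}\equiv-a_{j}$. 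Let $S$ be the set of residues realized as some $a_{i}$, and for $a\in S$ put $v_{a}=\sum_{i\in C,\,a_{i}=a}u_{i}$. Every equation supported on $C$ depends on $(u_{i})_{i\in C}$ only through these block sums: \eqref{eq:thm1_kernel_sum_to_zero} reads $\sum_{a\in S}v_{a}=0$, and \eqref{eq:thm1_group_into_pairs} reads $v_{a}=v_{-a}$ for all $a\in S$, with the convention $v_{-a}=0$ when $-a\notin S$. Since the block-sum map $\RR^{C}\to\RR^{S}$ is surjective with kernel of dimension $|C|-|S|$, we obtain $\dim U_{C}=(|C|-|S|)+\dim V_{C}$, where $V_{C}\subseteq\RR^{S}$ is the solution space of the block-sum system.

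To finish, extend $v\in\RR^{S}$ by zero to $S\cup(-S)$: the relations $v_{a}=v_{-a}$ are then equivalent to the statement that this extension is an even function vanishing off $S\cap(-S)$, so $V_{C}$ is isomorphic to the space of even functions on $S\cap(-S)$ with zero total sum. That space of even functions has dimension $e$, the number of orbits of the involution $a\mapsto-a$ on $S\cap(-S)$, and the zero-sum relation is a genuine constraint exactly when $S\cap(-S)\neq\emptyset$, i.e.\ exactly when $\kappa_{C}=1$; hence $\dim V_{C}=e-\kappa_{C}$. On the other side, the $\mathscr{I}$-classes contained in $C$ are in bijection with the equivalence classes of $S$ under $a\sim\pm a$, and those on which $\pi_{i}=-\pi_{i}$ are exactly the fixed points of $a\mapsto-a$ lying in $S$ --- of which there is at most one, occurring only when $d=2$, since a residue coprime to $d$ with $2a\equiv0$ forces $d\mid2$. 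An elementary count of these classes, sorting them by whether or not they lie inside $S\cap(-S)$ and accounting for that lone possible fixed point, yields $\iota_{C}=|S|-e$. Combining the two computations gives $\dim U_{C}=(|C|-|S|)+(e-\kappa_{C})=|C|-\iota_{C}-\kappa_{C}$, as required. The conceptual input is entirely Theorem \ref{thm:main-thm}; the only real work is the bookkeeping of the final paragraph, and the single point that must be watched is the degenerate case $d=2$, where $\pi_{i}$ and $-\pi_{i}$ coincide.
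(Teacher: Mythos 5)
Your proof is correct and takes essentially the same route as the paper: dualize via Theorem \ref{thm:main-thm} and compute the rank of the system \eqref{eq:thm1_group_into_pairs}--\eqref{eq:thm1_kernel_sum_to_zero}, with the only rank drop coming from $\mathscr{K}$-classes containing no pair $\pi_{i}=-\pi_{j}$. The paper merely asserts the linear independence of the remaining relations, whereas your block-sum reduction to residues modulo $d$ actually verifies it (including the $d=2$ edge case), so your write-up is a more detailed version of the same argument rather than a different one.
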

\begin{proof}
The distinct relations of the form (\ref{eq:thm1_group_into_pairs})
are in 1-1 correspondence with the equivalence classes $[i]\in\mathscr{I}$
such that $\pi_{i}\neq-\pi_{i}$. Note that in case $\pi_{i}=-\pi_{i}$,
the relation (\ref{eq:thm1_group_into_pairs}) is trivial. Similarly,
the distinct relations of the form (\ref{eq:thm1_kernel_sum_to_zero})
are in 1-1 correspondence with the equivalence classes of $\mathscr{K}$.
The collection of all these relations are linearly independent except
in the case when some $J\in\mathscr{K}$ does not contain any coordinates
$i,j$ for which $\pi_{i}=-\pi_{j}$. In this situation, the relation
(\ref{eq:thm1_kernel_sum_to_zero}) corresponding to $J$ is already
implied by the relations (\ref{eq:thm1_group_into_pairs}) corresponding
to the equivalence classes $I\in\mathscr{I}$ contained in $J$. Thus,
after excluding the relations corresponding to such $J\in\mathscr{K}$,
we conclude that the space of $u\in\RR^{n}$ in which $\left\langle u,\lambda\right\rangle =0$
for all $\lambda\in\Lambda\cap[0,1)^{n}$ has dimension $n-\iota-\kappa$
and hence the dimension of the span of $\Lambda\cap[0,1)^{n}$ equals
$\iota+\kappa$. 
\end{proof}
\begin{table}
\centering{}\caption{An illustration of Theorem \ref{thm:main-thm} and Corollary \ref{cor:The-dimension-of-span}
for the lattice $\Lambda$ generated by $\protect\ZZ^{8}$ and the
two points $\lambda=\tfrac{1}{10}(1,9,3,7,1,1,3,5)$ and $\lambda'=\tfrac{1}{10}(2,8,6,4,1,1,3,0)$.
The coordinate projection maps $\pi_{i}:\Lambda/\protect\ZZ^{8}\rightarrow\protect\RR/\protect\ZZ$
are uniquely determined by the two numbers $\lambda_{i}$ and $\lambda'_{i}$.
In this example, $\iota=4$ (corresponding to the classes $I_{1},I_{2},I_{3},I_{4}$
in $\mathscr{I}$) and $\kappa=2$ (corresponding to the classes $J_{1},J_{3}$
in $\mathscr{K}$). By Corollary \ref{cor:The-dimension-of-span},
the dimension of the linear span of $\Lambda\cap[0,1)^{8}$ is 6.}
\begin{tabular}{|c||ccccc|cccc|cc|}
\hline 
{\scriptsize{}$\mathscr{K}$} & \multicolumn{4}{c}{{\scriptsize{}\cellcolor{darkergray}$J_{1}$}} &  & \multicolumn{3}{c}{{\scriptsize{}\cellcolor{darkergray}$J_{2}$}} &  & {\scriptsize{}\cellcolor{darkergray}$J_{3}$} & \tabularnewline
{\scriptsize{}$\mathscr{I}$} & \multicolumn{2}{c}{{\scriptsize{}\cellcolor{lightgray}$I_{1}$}} & \multicolumn{2}{c}{{\scriptsize{}\cellcolor{sortagray}$I_{2}$}} &  & \multicolumn{2}{c}{{\scriptsize{}\cellcolor{lightgray}$I_{3}$}} & {\scriptsize{}\cellcolor{sortagray}$I_{4}$} &  & {\scriptsize{}\cellcolor{lightgray}$I_{5}$} & \tabularnewline
{\scriptsize{}$i$} & {\scriptsize{}\cellcolor{lightgray}$1$} & {\scriptsize{}\cellcolor{lightgray}$2$} & {\scriptsize{}\cellcolor{sortagray}$3$} & {\scriptsize{}\cellcolor{sortagray}$4$} &  & {\scriptsize{}\cellcolor{lightgray}$5$} & {\scriptsize{}\cellcolor{lightgray}$6$} & {\scriptsize{}\cellcolor{sortagray}$7$} &  & {\scriptsize{}\cellcolor{lightgray}$8$} & \tabularnewline
\hline 
{\scriptsize{}$\lambda_{i}$} & {\scriptsize{}\cellcolor{lightgray}$0.1$} & {\scriptsize{}\cellcolor{lightgray}$0.9$} & {\scriptsize{}\cellcolor{sortagray}$0.3$} & {\scriptsize{}\cellcolor{sortagray}$0.7$} &  & {\scriptsize{}\cellcolor{lightgray}$0.1$} & {\scriptsize{}\cellcolor{lightgray}$0.1$} & {\scriptsize{}\cellcolor{sortagray}$0.3$} &  & {\scriptsize{}\cellcolor{lightgray}$0.5$} & \tabularnewline
{\scriptsize{}$\lambda'_{i}$} & {\scriptsize{}\cellcolor{lightgray}$0.2$} & {\scriptsize{}\cellcolor{lightgray}$0.8$} & {\scriptsize{}\cellcolor{sortagray}$0.6$} & {\scriptsize{}\cellcolor{sortagray}$0.4$} &  & {\scriptsize{}\cellcolor{lightgray}$0.1$} & {\scriptsize{}\cellcolor{lightgray}$0.1$} & {\scriptsize{}\cellcolor{sortagray}$0.3$} &  & {\scriptsize{}\cellcolor{lightgray}$0.0$} & \tabularnewline
\hline 
{\scriptsize{}(\ref{eq:thm1_group_into_pairs}) relations} & {\scriptsize{}\cellcolor{lightgray}$\phantom{+}u_{1}$} & {\scriptsize{}\cellcolor{lightgray}$-u_{2}$} & {\scriptsize{}\cellcolor{sortagray}} & {\scriptsize{}\cellcolor{sortagray}} & {\scriptsize{}$=0$} & {\scriptsize{}\cellcolor{lightgray}$\phantom{+}u_{5}$} & {\scriptsize{}\cellcolor{lightgray}$+u_{6}$} & {\scriptsize{}\cellcolor{sortagray}} & {\scriptsize{}$=0$} & {\scriptsize{}\cellcolor{lightgray}$u_{8}-u_{8}$} & {\scriptsize{}$=0$}\tabularnewline
 & {\scriptsize{}\cellcolor{lightgray}} & {\scriptsize{}\cellcolor{lightgray}} & {\scriptsize{}\cellcolor{sortagray}$\phantom{+}u_{3}$} & {\scriptsize{}\cellcolor{sortagray}$-u_{4}$} & {\scriptsize{}$=0$} & {\scriptsize{}\cellcolor{lightgray}} & {\scriptsize{}\cellcolor{lightgray}} & {\scriptsize{}\cellcolor{sortagray}$\phantom{+}u_{7}$} & {\scriptsize{}$=0$} & {\scriptsize{}\cellcolor{lightgray}} & \tabularnewline
\hline 
{\scriptsize{}(\ref{eq:thm1_kernel_sum_to_zero}) relations} & {\scriptsize{}\cellcolor{darkergray}$\phantom{+}u_{1}$} & {\scriptsize{}\cellcolor{darkergray}$+u_{2}$} & {\scriptsize{}\cellcolor{darkergray}$+u_{3}$} & {\scriptsize{}\cellcolor{darkergray}$+u_{4}$} & {\scriptsize{}$=0$} & {\scriptsize{}\cellcolor{darkergray}$\phantom{+}u_{5}$} & {\scriptsize{}\cellcolor{darkergray}$+u_{6}$} & {\scriptsize{}\cellcolor{darkergray}$+u_{7}$} & {\scriptsize{}$=0$} & {\scriptsize{}\cellcolor{darkergray}$u_{8}$} & {\scriptsize{}$=0$}\tabularnewline
\hline 
\end{tabular}
\end{table}
Observe that it is always true that 
\[
\lambda_{1}+\cdots+\lambda_{n}+\mu_{1}+\cdots+\mu_{n}=\left|\supp\left(\lambda\right)\right|=\left|\supp\left(\mu\right)\right|
\]
for every pair $\lambda,\mu\in\Lambda\cap[0,1)^{n}$ for which $\lambda+\ZZ^{n}=-\mu+\ZZ^{n}$.
This follows from the fact that for every $i=1,2,\ldots,n$, we have
either $\lambda_{i}=1-\mu_{i}$ if both $\lambda_{i}$ and $\mu_{i}$
are nonzero, or $\lambda_{i}=\mu_{i}=0$ otherwise. The next corollary
characterizes the situation where the ``mass'' of $\lambda+\mu$
is distributed as equally as possible between $\lambda$ and $\mu$
for all such pairs $\lambda,\mu$. It is a direct generalization of
Seb\H{o}'s Conjecture \ref{conj:sebo_conjecture}:
\begin{cor}[{c.f. \cite[Conjecture 4.1]{sebho_introduction_1999}, \cite[Proposition 1.8]{batyrev_generalization_2010}}]
\label{thm:semistable}The equality
\begin{equation}
\lambda_{1}+\cdots+\lambda_{n}=\frac{\left|\supp\left(\lambda\right)\right|}{2}\label{eq:sebo_equation}
\end{equation}
holds for all $\lambda=\left(\lambda_{1},\ldots,\lambda_{n}\right)\in\Lambda\cap[0,1)^{n}$
if and only if there exists an involution $\sigma$ of $\left\{ 1,2,\ldots,n\right\} $
(i.e. a bijection satisfying $\sigma=\sigma^{-1}$) such that $\lambda_{i}+\lambda_{\sigma\left(i\right)}$
is an integer for all $i=1,2,\ldots,n$ and $\lambda\in\Lambda$.
\end{cor}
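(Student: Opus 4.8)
The plan is to deduce this corollary from Theorem~\ref{thm:main-thm} by doubling the ambient dimension, so that the ``affine'' identity~(\ref{eq:sebo_equation}) becomes a genuine linear vanishing condition $\langle u,\cdot\rangle=0$ to which the Terminal Lemma applies verbatim. Introduce
\[
\Lambda':=\bigl\{(\lambda,\mu)\in\RR^{n}\times\RR^{n}:\lambda\in\Lambda\text{ and }\lambda+\mu\in\ZZ^{n}\bigr\}.
\]
I would first record that $\Lambda'$ is the image of $\Lambda\times\ZZ^{n}$ under the unimodular map $(\lambda,z)\mapsto(\lambda,z-\lambda)$, hence a lattice containing $\ZZ^{2n}$, and that the first projection $(\lambda,\mu)\mapsto\lambda$ induces an isomorphism $\Lambda'/\ZZ^{2n}\to\Lambda/\ZZ^{n}$ (because $\lambda+\mu\in\ZZ^{n}$ pins down $\mu$ modulo $\ZZ^{n}$). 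Under this identification the $2n$ coordinate projections of $\Lambda'$ are $\pi_{1},\dots,\pi_{n}$ on the first block of coordinates and $-\pi_{1},\dots,-\pi_{n}$ on the second, so the standing hypothesis $\ker\pi_{i}\neq\Lambda/\ZZ^{n}$ transfers to $\Lambda'$.

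Next I would observe that every point of $\Lambda'\cap[0,1)^{2n}$ has the form $(\lambda,\mu)$ with $\lambda\in\Lambda\cap[0,1)^{n}$ and $\mu_{i}=\{-\lambda_{i}\}$, so $\lambda\mapsto(\lambda,\mu)$ is a bijection onto $\Lambda'\cap[0,1)^{2n}$, and
\[
\Big\langle(\underbrace{1,\dots,1}_{n},\underbrace{-1,\dots,-1}_{n}),(\lambda,\mu)\Big\rangle=\sum_{i=1}^{n}\lambda_{i}-\sum_{i=1}^{n}\{-\lambda_{i}\}=2\sum_{i=1}^{n}\lambda_{i}-\bigl|\supp(\lambda)\bigr|.
\]
Hence~(\ref{eq:sebo_equation}) holds for every $\lambda\in\Lambda\cap[0,1)^{n}$ if and only if $\langle u,w\rangle=0$ for every $w\in\Lambda'\cap[0,1)^{2n}$, where $u$ is the vector of $n$ ones followed by $n$ minus-ones. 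Applying Theorem~\ref{thm:main-thm} to $\Lambda'$ and this $u$: since $\ker(-\pi_{i})=\ker\pi_{i}$, within any $\mathscr K$-class of the $2n$ coordinates the $+1$'s from the first block cancel the $-1$'s from the second, so every relation~(\ref{eq:thm1_kernel_sum_to_zero}) for $\Lambda'$ reads $0=0$ and is automatic; and for the same reason the two sides of each relation~(\ref{eq:thm1_group_into_pairs}) for $\Lambda'$ turn out to be negatives of one another, so the whole family of those relations collapses to the single family of conditions $\bigl|\{i:\pi_{i}=\pi_{j}\}\bigr|=\bigl|\{i:\pi_{i}=-\pi_{j}\}\bigr|$ for $j=1,\dots,n$. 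Thus~(\ref{eq:sebo_equation}) holds for all $\lambda$ exactly when these $n$ equalities hold.

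Finally I would check that the $n$ equalities are equivalent to the existence of the involution $\sigma$. An involution $\sigma$ with $\lambda_{i}+\lambda_{\sigma(i)}\in\ZZ$ for all $\lambda\in\Lambda$ is the same as one with $\pi_{\sigma(i)}=-\pi_{i}$ for every $i$, and any such $\sigma$ must stabilize each class of $\mathscr I$, so it may be built one class at a time. For a class $I$ with $\pi_{i}=-\pi_{i}$ for every $i\in I$, every involution of $I$ qualifies, and each equality indexed by a $j\in I$ is trivially true since the two sets coincide. For a class $I$ with $\pi_{i}\neq-\pi_{i}$, write $I=I^{+}\sqcup I^{-}$ according to the sign of $\pi_{i}$ relative to a fixed member of $I$; then a qualifying $\sigma|_{I}$ is precisely a bijection $I^{+}\to I^{-}$, which exists if and only if $|I^{+}|=|I^{-}|$, i.e.\ if and only if the equality indexed by any $j\in I$ holds. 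Patching over all classes produces the global $\sigma$ from the $n$ equalities, and restricting a global $\sigma$ recovers them; this closes the chain of equivalences and proves the corollary.

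The only genuine idea here is the passage to $\Lambda'$; everything after is bookkeeping, and I expect the delicate step to be the middle one --- keeping straight which of the $2n$ coordinates of $\Lambda'$ sits in which block and carries which sign of $u$ while evaluating~(\ref{eq:thm1_group_into_pairs}) and~(\ref{eq:thm1_kernel_sum_to_zero}), and confirming that exactly the relations $|\{i:\pi_{i}=\pi_{j}\}|=|\{i:\pi_{i}=-\pi_{j}\}|$ (and no others) survive. For completeness I would also note that the ``if'' direction is elementary on its own: a fixed point of $\sigma$ forces $\lambda_{i}\in\{0,\tfrac12\}$ and a two-cycle $\{i,\sigma(i)\}$ forces $\lambda_{i}+\lambda_{\sigma(i)}\in\{0,1\}$, so in every case such a coordinate or pair contributes exactly half of its support to the left-hand side of~(\ref{eq:sebo_equation}).
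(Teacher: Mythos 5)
Your proposal is correct and follows essentially the same route as the paper: both pass to the doubled lattice $\Lambda'\subset\RR^{2n}$ (your $\{(\lambda,\mu):\lambda\in\Lambda,\ \lambda+\mu\in\ZZ^{n}\}$ is exactly the lattice generated by $\ZZ^{2n}$ and the image of $\lambda\mapsto(\lambda,-\lambda)$ used in the paper), apply Theorem \ref{thm:main-thm} with $u=(1,\ldots,1,-1,\ldots,-1)$ to extract the equalities $|\{i:\pi_{i}=\pi_{j}\}|=|\{i:\pi_{i}=-\pi_{j}\}|$, and build the involution class by class. The only cosmetic difference is that you run the argument as a single chain of equivalences, whereas the paper dispatches the ``if'' direction separately by the elementary $\{x\}+\{-x\}$ observation you also note at the end.
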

\begin{example}
If the coordinates of the points in $\Lambda$ are all half-integral
(i.e. $\Lambda\subset\tfrac{1}{2}\ZZ^{n}$), then Corollary \ref{thm:semistable}
is trivial. Indeed, both the hypothesis and the conclusion always
hold; for the conclusion we may take $\sigma$ to be the identity
map.
\end{example}
\begin{example}
If $\Lambda$ is generated by $\ZZ^{n}$ and the point $\tfrac{1}{r}(a_{1},a_{2},\ldots,a_{n})$,
where the $a_{i}$'s are positive integers coprime to $r$, then we
recover Conjecture \ref{conj:sebo_conjecture}. Indeed, in this case
$\left|\supp(\lambda)\right|=n$ for every nonzero $\lambda\in[0,1)^{n}\cap\Lambda$,
and $\lambda_{i}+\lambda_{\sigma(i)}\in\ZZ$ for all $i=1,2,\ldots,n$
and $\lambda\in\Lambda$ if and only if $a_{i}+a_{\sigma(i)}=r$ for
all $i=1,2,\ldots,n$. This follows from the fact that for every nonzero
$\lambda\in\Lambda\cap[0,1)^{n}$ there exists $1\leq k\leq r-1$
such that $\lambda_{i}=\{ka_{i}/r\}\neq0$ for each $i=1,2,\ldots,n$.
\end{example}
\begin{example}
If $\Delta\subseteq\RR^{n-1}$ is a lattice polytope, then the Ehrhart
series of $\Delta$ is given by
\[
\mathrm{Ehr}_{\Delta}(t)=\sum_{m\geq0}\left|m\Delta\cap\ZZ^{n-1}\right|t^{m}=\frac{1+h_{1}^{*}t+\cdots+h_{n-1}^{*}t^{n-1}}{(1-t)^{n}}
\]
and the numerator of the left-hand side is called the $h^{*}$\emph{-polynomial}
of $\Delta$. If furthermore $\Delta=\mathrm{conv}(v_{1},\ldots,v_{n})$
is a simplex, where each $v_{i}\in\ZZ^{n-1}$, then it is known that
$h_{k}^{*}$ equals the number of $\lambda\in\Lambda_{\Delta}\cap[0,1)^{n}$
such that $\lambda_{1}+\cdots+\lambda_{n}=k$ \cite[Corollary 3.11]{beck_computing_2007}.
Here $\Lambda_{\Delta}\subset\RR^{n}$ denotes the dual lattice of
the lattice generated by $(v_{i},1)\in\ZZ^{n}$ for $i=1,2,\ldots,n$;
equivalently, the lattice of points $\lambda\in\RR^{n}$ such that
$\left\langle \lambda,(v_{i},1)\right\rangle \in\ZZ$ for all $i$.

Polytopes $\Delta$ with $h^{*}$-polynomial of the form $1+h_{k}^{*}t^{k}$
for some positive $k$ have been completely classified by Batyrev
and Hofscheier \cite{batyrev_lattice_2013}. They show that such a
polytope $\Delta$ must be a simplex; therefore, the corresponding
lattice $\Lambda_{\Delta}$ has the property that $\lambda_{1}+\cdots+\lambda_{n}=k$
for all nonzero $\lambda\in\Lambda_{\Delta}\cap[0,1)^{n}$. It follows
that the hypothesis of Corollary \ref{thm:semistable} applies to
$\Lambda_{\Delta}$, and the resulting involution $\sigma$ appears
in their classification. They also describe some properties that $\Lambda_{\Delta}/\ZZ^{n}$
has; for instance, $\Lambda_{\Delta}/\ZZ^{n}$ is isomorphic to the
additive group of $\mathbf{F}_{p}^{r}$ for some prime $p$ and integer
$r$, and the integer $k$ satisfies $(p^{r}-p^{r-1})n=2k(p^{r}-1)$.
\end{example}
\begin{proof}[Proof of Corollary \ref{thm:semistable}]
The ``if'' direction is an immediate consequence of the fact that,
for every $x\in\RR$, $\{x\}+\{-x\}$ equals 1 if $x\notin\ZZ$ and
0 otherwise. 

For the ``only if'' direction, consider the lattice $\Lambda'\subset\RR^{2n}$
which is generated by $\ZZ^{2n}$ and the image of the map $\Lambda\rightarrow\RR^{2n}$
defined by
\[
(\lambda_{1},\ldots,\lambda_{n})\mapsto(\lambda_{1},\ldots,\lambda_{n},-\lambda_{1},\ldots,-\lambda_{n}).
\]
Let $\lambda\in\Lambda\cap[0,1)^{n}$ and let $\lambda'\in\Lambda'\cap[0,1)^{2n}$
be the unique integral translate in $[0,1)^{2n}$ of the image of
$\lambda$ under this map. Let $\mu\in\Lambda\cap[0,1)^{n}$ the the
unique lattice point in $[0,1)^{n}$ which satisfies $\lambda+\ZZ^{n}=-\mu+\ZZ^{n}$.
Then
\[
\lambda_{1}'+\cdots+\lambda_{n}'=\lambda_{n+1}'+\cdots+\lambda_{2n}'
\]
since by assumption we have
\[
\lambda_{1}'+\cdots+\lambda_{n}'=\lambda_{1}+\cdots+\lambda_{n}=\frac{\left|\supp(\lambda)\right|}{2}
\]
and
\[
\lambda_{n+1}'+\cdots+\lambda_{2n}'=\mu_{1}+\cdots+\mu_{n}=\frac{\left|\supp(\mu)\right|}{2}
\]
and we know by the preceding discussion that $\left|\supp(\mu)\right|=\left|\supp(\lambda)\right|$.
If we let
\[
u'=(\underset{n}{\underbrace{1,\ldots,1}},\underset{n}{\underbrace{-1,\ldots,-1}})\in\RR^{2n},
\]
we get $\left\langle u',\lambda'\right\rangle =0$ for each $\lambda'\in\Lambda'\cap[0,1)^{2n}$.
We may therefore apply Theorem \ref{thm:main-thm} to obtain the equality
\[
\left(\sum_{\substack{i=1\\
\pi_{i}=\pi_{j}
}
}^{n}1\right)-\left(\sum_{\substack{i=1\\
\pi_{i}=-\pi_{j}
}
}^{n}1\right)=\left(\sum_{\substack{i=1\\
\pi_{i}=-\pi_{j}
}
}^{n}1\right)-\left(\sum_{\substack{i=1\\
\pi_{i}=\pi_{j}
}
}^{n}1\right)
\]
for each $j\in\{1,2,\ldots,n\}$, which simplifies to
\[
\left|\left\{ i:\pi_{i}=\pi_{j}\right\} \right|=\left|\left\{ i:\pi_{i}=-\pi_{j}\right\} \right|.
\]
We now construct our involution $\sigma:\left\{ 1,2,\ldots,n\right\} \rightarrow\left\{ 1,2,\ldots,n\right\} $.
For each $i$ such that $\pi_{i}=-\pi_{i}$, we set $\sigma(i)=i$.
For each coordinate projection map $\pi$ such that $\pi\neq-\pi$,
we pair up each coordinate $i$ such that $\pi_{i}=\pi$ with a unique
coordinate $j$ such that $\pi_{j}=-\pi$. Then, for each such pair
$(i,j)$, we set $\sigma(i)=j$ and $\sigma(j)=i$. Now let $\lambda\in\Lambda$
and let $i\in\{1,2,\ldots,n\}$. Then
\[
\lambda_{i}+\lambda_{\sigma(i)}+\ZZ=\pi_{i}(\lambda)+(-\pi_{i})(\lambda)=0+\ZZ,
\]
and hence $\lambda_{i}+\lambda_{\sigma(i)}$ is an integer.
\end{proof}
The proof of Theorem \ref{thm:main-thm}, specifically the ``only-if''
direction, depends on the following two claims. The first is used
to establish the relations (\ref{eq:thm1_kernel_sum_to_zero}) assuming
the hypotheses of Theorem \ref{thm:main-thm}. The proof given in
the next section relies on the Poisson summation formula for finite
abelian groups.
\begin{lem}
\label{prop:01matrixinvertible}Let $G$ be a finite abelian group.
For a subgroup $K$ of $G$, let $\one_{K}\in L^{2}(G)$ denote the
indicator function of $K$. Then
\[
\{\one_{K}\;:\;K^{\perp}\mathrm{\;is\;a\;cyclic\;subgroup\;of\;}\widehat{G}\}
\]
is linearly independent in $L^{2}(G).$
\end{lem}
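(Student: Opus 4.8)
The plan is to show linear independence by a Fourier-analytic argument, exploiting the fact that the Fourier transform of $\one_K$ is (a scalar multiple of) $\one_{K^\perp}$. Concretely, suppose we have a vanishing linear combination
\[
\sum_{K} c_K \one_K = 0,
\]
where the sum runs over subgroups $K\leq G$ with $K^\perp$ cyclic. Taking the discrete Fourier transform over $\widehat G$ (equivalently, pairing with each character), the relation becomes
\[
\sum_{K} c_K \,|K|\, \one_{K^\perp} = 0
\]
as a function on $\widehat G$. Since $K\mapsto K^\perp$ is an inclusion-reversing bijection on the lattice of subgroups, it therefore suffices to prove that the indicator functions $\{\one_C : C\ \text{a cyclic subgroup of }\widehat G\}$ are linearly independent in $L^2(\widehat G)$. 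Replacing $\widehat G$ by an arbitrary finite abelian group, the claim reduces to: \emph{the indicator functions of the cyclic subgroups of a finite abelian group $G$ are linearly independent.}

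For this reduced statement I would argue by a Möbius/poset inversion on the subgroup lattice, or more directly by induction on $|G|$ using a minimal-element argument. Suppose $\sum_{C\ \text{cyclic}} c_C \one_C = 0$. Pick a cyclic subgroup $C_0$ that is \emph{maximal} among those with $c_{C_0}\neq 0$ (if any nonzero coefficient exists). Choose a generator $g$ of $C_0$. Evaluating the relation at $g$: every cyclic subgroup $C$ containing $g$ must contain $C_0 = \langle g\rangle$, hence $C\supseteq C_0$; by maximality of $C_0$ the only such $C$ with $c_C\neq 0$ is $C_0$ itself. Therefore $0 = \sum_{C\ni g} c_C = c_{C_0} \neq 0$, a contradiction. (One must check that a generator $g$ of $C_0$ does not lie in any strictly larger cyclic subgroup $C$ with $c_C\neq 0$ — this is exactly the maximality, since $g\in C$ forces $C_0\subseteq C$.) Hence all $c_C = 0$, proving linear independence.

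Alternatively, and perhaps cleaner to write, one can phrase this via the Poisson summation formula as suggested in the paper: $\widehat{\one_K} = \tfrac{|K|}{|G|}\one_{K^\perp}$ (with the normalization of the inner product used in the excerpt), so that an arbitrary $\widehat{\,\cdot\,}$-image computation converts the original family directly into indicator functions of cyclic subgroups, and then the maximal-subgroup evaluation trick finishes it. The main obstacle is bookkeeping: keeping the duality $K\leftrightarrow K^\perp$ and the cyclic-versus-quotient-cyclic distinction straight (recall $K^\perp$ cyclic iff $\widehat G/\widehat{G/K}\cong \ldots$, i.e. iff $G/K$ is cyclic), and making sure the normalization constants $|K|$ are nonzero so they can be divided out — which they trivially are. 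No deep input is needed beyond elementary character theory and the structure of the subgroup lattice.
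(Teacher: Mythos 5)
Your proposal is correct and follows essentially the same route as the paper: the Poisson summation/Fourier transform step reducing the problem to the linear independence of indicator functions of cyclic subgroups of $\widehat{G}$ is exactly the paper's first lemma, and your maximal-element-plus-generator evaluation is the same triangularity phenomenon the paper exploits via a linear extension of the inclusion poset (a generator of $C_{0}$ lies in a cyclic subgroup $C$ iff $C\supseteq C_{0}$). No gaps; the argument is complete.
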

We remark that this statement is quite easy to prove in the case when
$G$ is cyclic.

The second claim is used to establish the relations (\ref{eq:thm1_group_into_pairs})
assuming the hypotheses of Theorem \ref{thm:main-thm}. Suppose $G$
is a finite abelian group and let $H=\Hom_{\ZZ}(G,\RR/\ZZ)$. Consider
the space $\Ltwoodd(H)$ consisting of functions $f:H\rightarrow\CC$
which satisfy $f(-\phi)=-f(\phi)$ for all $\phi\in H$. For each
$g\in G$, define the function $S_{g}\in L^{2}(H)$ by 
\[
S_{g}(\phi)=B_{1}(\phi(g))\qquad\text{for all}\;\phi\in H.
\]
Note that these functions lie in $L_{\mathrm{odd}}^{2}(H)$ since
$B_{1}$ is an odd function. Crucially, however, much more is true:
\begin{thm}[{c.f. \cite[Proposition 1.2]{morrison_terminal_1984}}]
\label{thm:dimensionMS} The space $L_{\mathrm{odd}}^{2}(H)$ is
spanned by the functions $S_{g}$ for $g\in G$.
\end{thm}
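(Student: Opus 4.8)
The plan is to transport the statement to the character basis of $L^{2}(H)$, where each $S_{g}$ acquires an explicit cotangent expansion, and then to peel off cyclic subgroups by induction until the question reduces to the classical non-vanishing of Dirichlet $L$-functions at $s=1$. Since $H\cong\widehat{G}$ and $\widehat{\widehat{G}}\cong G$ under the identifications of Section~\ref{sec:Background-and-notation}, the functions $\chi_{h}\in L^{2}(H)$ defined by $\chi_{h}(\phi):=e(\phi(h))$ for $h\in G$ form an orthonormal basis of $L^{2}(H)$, with $\chi_{h}\chi_{h'}=\chi_{h+h'}$ and $\overline{\chi_{h}}=\chi_{-h}$. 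Let $N$ be the exponent of $G$. Every value $\phi(g)$ lies in the finite subgroup $\tfrac{1}{N}\ZZ/\ZZ\subset\RR/\ZZ$, on which $B_{1}$ has a finite Fourier expansion $B_{1}(x)=\sum_{k=1}^{N-1}\gamma_{k}\,e(kx)$; a short computation gives $\gamma_{k}=\tfrac{i}{2N}\cot(\pi k/N)$, the relevant features being that $\gamma_{k}$ depends only on $k$ modulo $N$, is odd in $k$, and vanishes exactly when $k\equiv 0$ or, for $N$ even, $k\equiv N/2$. Substituting $x=\phi(g)$ and using $e(k\phi(g))=\chi_{kg}(\phi)$ gives
\[
S_{g}=\sum_{k=1}^{N-1}\gamma_{k}\,\chi_{kg}\qquad\text{in }L^{2}(H).
\]

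As noted just after the statement, each $S_{g}$ lies in $\Ltwoodd(H)$, so it suffices to show that no nonzero $w\in\Ltwoodd(H)$ is orthogonal to every $S_{g}$. Writing $w=\sum_{h\in G}\alpha(h)\chi_{h}$, membership in $\Ltwoodd(H)$ says precisely that $\alpha:G\to\CC$ is odd, and the relations $\langle w,S_{g}\rangle=0$ become $\sum_{k=1}^{N-1}\overline{\gamma_{k}}\,\alpha(kg)=0$ for every $g\in G$; I must deduce $\alpha\equiv 0$, which I do by induction on $|G|$. Fix a proper subgroup $G'<G$. For $g\in G'$, the term $\alpha(kg)$ depends only on $k$ modulo $m:=\mathrm{ord}(g)$, so grouping the index $k$ into residue classes modulo $m$ replaces the coefficients $\gamma_{k}$ by the sums $\sum_{k\equiv k_{0}}\gamma_{k}$, which are exactly the Fourier coefficients of $B_{1}$ restricted to $\tfrac{1}{m}\ZZ/\ZZ$; hence the relation depends only on $\alpha|_{G'}$ and coincides with the relation obtained by running the construction above for $G'$ and $g$. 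Applying the inductive hypothesis to $G'$ therefore gives $\alpha|_{G'}=0$. If $G$ is not cyclic it is the union of its proper subgroups, so $\alpha\equiv0$ and we are done; otherwise $G=\ZZ/N\ZZ$, and the inductive step already forces $\alpha$ to be supported on the generators of $G$, that is, on $(\ZZ/N\ZZ)^{\times}$.

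It remains to treat $G=\ZZ/N\ZZ$ with $\alpha$ supported on $(\ZZ/N\ZZ)^{\times}$. For a unit $g$, the map $k\mapsto kg$ permutes $(\ZZ/N\ZZ)^{\times}$ and maps non-units to non-units, where $\alpha$ vanishes, so the relation for $g$ reduces to $\sum_{l\in(\ZZ/N\ZZ)^{\times}}\overline{\gamma_{lt}}\,\alpha(l)=0$, with $t:=g^{-1}$ ranging over $(\ZZ/N\ZZ)^{\times}$. Expand $\alpha=\sum_{\chi}c_{\chi}\chi$ over the Dirichlet characters modulo $N$; since $\alpha$ is odd, $c_{\chi}=0$ for even $\chi$. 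As the Dirichlet characters form an orthogonal basis of the space of functions on $(\ZZ/N\ZZ)^{\times}$, separating the variable $t$ shows that these relations hold for all $t$ if and only if, for every $\chi$, either $c_{\chi}=0$ or $\sum_{l\in(\ZZ/N\ZZ)^{\times}}\gamma_{l}\,\overline{\chi(l)}=0$. Since $\gamma_{l}=\tfrac{i}{2N}\cot(\pi l/N)$ and, for odd $\chi$, the character $\overline{\chi}$ is odd, the partial fraction expansion of $\pi\cot$ identifies $\sum_{l=1}^{N-1}\cot(\pi l/N)\,\overline{\chi(l)}$ with $\tfrac{2N}{\pi}L(1,\overline{\chi})$, which is nonzero because $\overline{\chi}$ is non-principal. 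Hence the second alternative fails for odd $\chi$, while $c_{\chi}=0$ already for even $\chi$, so $\alpha\equiv0$ and the theorem follows.

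Every part of the argument above is elementary except for the two facts about Dirichlet $L$-functions just invoked, which I would take from \cite[Section~9.2]{montgomery_multiplicative_2006}: the closed form $\sum_{l=1}^{N-1}\chi(l)\cot(\pi l/N)=\tfrac{2N}{\pi}L(1,\chi)$ for odd $\chi$, and the non-vanishing $L(1,\chi)\neq0$ for non-principal $\chi$. The latter is the real obstacle — for real odd $\chi$ it is equivalent to the positivity of a class number — though it is entirely classical, and the induction on $|G|$ is precisely what lets us reach a form of the problem (a cyclic group, with an odd function supported on units) where this input applies directly.
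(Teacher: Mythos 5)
Your proof is correct, and it takes a genuinely different route from the paper's. You argue by duality: since each $S_{g}$ is odd, it suffices to show that an odd $w=\sum_{h}\alpha(h)\chi_{h}$ orthogonal to every $S_{g}$ vanishes, and the finite Fourier expansion $S_{g}=\sum_{k=1}^{N-1}\gamma_{k}\chi_{kg}$ with $\gamma_{k}=\tfrac{i}{2N}\cot(\pi k/N)$ turns orthogonality into the relations $\sum_{k}\overline{\gamma_{k}}\,\alpha(kg)=0$. The two pivots of your argument both check out: the distribution relation $\sum_{k\equiv k_{0}\,(m)}\gamma_{k}=\gamma_{k_{0}}^{(m)}$ (immediate from uniqueness of the discrete Fourier coefficients of the restriction of $B_{1}$ to $\tfrac{1}{m}\ZZ/\ZZ$) makes the relations compatible with restriction to subgroups, so induction on $\lvert G\rvert$ kills $\alpha$ off the generators and disposes of non-cyclic $G$ entirely; and in the remaining cyclic case the multiplicative Fourier analysis on $(\ZZ/N)^{\times}$ reduces everything to $\sum_{l}\chi(l)\cot(\pi l/N)=\tfrac{2N}{\pi}L(1,\chi)\neq 0$ for odd $\chi$, which is correct even for imprimitive $\chi$ (via the partial-fraction expansion of $\pi\cot$, or via the nonvanishing Euler factors relating $L(1,\chi)$ to $L(1,\chi^{*})$). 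The paper instead works in $L^{2}(R)$ for $R=\oplus_{i}\ZZ/r_{i}$, decomposes it into eigenspaces $\varepsilon_{\chi}$ under the $R^{\times}$-action, exhibits $\divisor(q_{\chi})$ explicit independent functions $w_{\chi,a}$ in each odd eigenspace via a M\"obius-inverted family $v_{\chi,a}$ and a triangular-matrix argument, and concludes by a dimension count. Both arguments ultimately rest on the same nonelementary input, $L(1,\chi)\neq 0$, but yours avoids all of the conductor and Gauss-sum bookkeeping and is considerably shorter; what it gives up is the explicit basis of each eigenspace (the content of Proposition \ref{thm:MReigenspaces}), which the paper's construction provides as a byproduct and which is of independent interest in Reid's original setting.
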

We remark that these functions are closely related to the Stickelberger
distribution associated with $B_{1}$ described in \cite[Chapter 2]{lang_cyclotomic_1990}.
As in \cite{morrison_terminal_1984,reid_young_1985}, the proof of
this theorem relies on Dirichlet's theorem that $L(1,\chi)\neq0$
for a nontrivial Dirichlet character $\chi$ where $L(s,\chi)$ denotes
the Dirichlet $L$-function associated with $\chi$. 

\section{\label{sec:Proof-of-Theorem-Main}Proof of Theorem \ref{thm:main-thm}}

We make some preliminary observations before stating the proof. Let
$H=\Hom_{\ZZ}\left(\Lambda/\ZZ^{n},\RR/\ZZ\right)$. Given $u=(u_{1},\ldots,u_{n})\in\RR^{n}$,
define $h_{u}\in L^{2}(H)$ to be the function
\[
h_{u}(\phi)=\left(\sum_{\substack{i=1\\
\pi_{i}=\phi
}
}^{n}u_{i}\right)-\left(\sum_{\substack{i=1\\
\pi_{i}=-\phi
}
}^{n}u_{i}\right)\quad\text{for all}\;\phi\in H.
\]
Also define as above, for each $\lambda\in\Lambda\cap[0,1)^{n}$,
the function $S_{\lambda}:H\rightarrow\CC$:
\[
S_{\lambda}(\phi)=B_{1}(\phi(\lambda+\ZZ^{n}))=\begin{cases}
\{\phi(\lambda+\ZZ^{n})\}-1/2, & \phi(\lambda+\ZZ^{n})\neq0+\ZZ\\
0, & \phi(\lambda+\ZZ^{n})=0+\ZZ
\end{cases}
\]
The most important property about these functions is that they are
odd functions; we have $S_{\lambda}(-\phi)=-S_{\lambda}(\phi)$ for
each $\phi\in H$ and $\lambda\in\Lambda\cap[0,1)^{n}$. 

Observe that for any $\lambda\in\Lambda\cap[0,1)^{n}$, we have
\[
\sum_{i=1}^{n}u_{i}\lambda_{i}=\left(\sum_{i=1}^{n}u_{i}S_{\lambda}(\pi_{i})\right)+\frac{1}{2}\left(\sum_{\substack{i=1\\
\lambda_{i}\neq0
}
}^{n}u_{i}\right).
\]
Since $\phi\mapsto-\phi$ is a permutation of $H$, we may write the
first term as
\begin{align*}
\sum_{\phi\in H}\sum_{\substack{i=1\\
\pi_{i}=\phi
}
}^{n}u_{i}S_{\lambda}(\phi) & =\frac{1}{2}\sum_{\phi\in H}\left(\left(\sum_{\substack{i=1\\
\pi_{i}=\phi
}
}^{n}u_{i}S_{\lambda}(\phi)\right)+\left(\sum_{\substack{i=1\\
\pi_{i}=-\phi
}
}^{n}u_{i}S_{\lambda}(-\phi)\right)\right)\\
 & =\frac{1}{2}\sum_{\phi\in H}\left(\left(\sum_{\substack{i=1\\
\pi_{i}=\phi
}
}^{n}u_{i}\right)-\left(\sum_{\substack{i=1\\
\pi_{i}=-\phi
}
}^{n}u_{i}\right)\right)S_{\lambda}(\phi)\\
 & =\frac{\left|H\right|}{2}\left\langle h_{u},S_{\lambda}\right\rangle 
\end{align*}
where the second-to-last equality follows from the fact that $S_{\lambda}$
is an odd function. So we conclude that for any $u\in\RR^{n}$ with
corresponding $h_{u}\in L^{2}(H)$ as defined above, and for any $\lambda\in\Lambda\cap[0,1)^{n}$,
we have 
\begin{equation}
\sum_{i=1}^{n}u_{i}\lambda_{i}=\frac{1}{2}\left(\left|H\right|\left\langle h_{u},S_{\lambda}\right\rangle +\sum_{\substack{i=1\\
\lambda_{i}\neq0
}
}^{n}u_{i}\right).\label{eq:alternate_ulambda}
\end{equation}

\begin{proof}[Proof of the if direction of Theorem \ref{thm:main-thm}]
We start with the easier direction. Assume $u\in\RR^{n}$ satisfy
the relations (\ref{eq:thm1_group_into_pairs}) and (\ref{eq:thm1_kernel_sum_to_zero})
and let $\lambda\in\Lambda\cap[0,1)^{n}$. The relations (\ref{eq:thm1_group_into_pairs})
imply that $h_{u}$ is the zero function, so by (\ref{eq:alternate_ulambda})
we may therefore write
\[
\sum_{i=1}^{n}u_{i}\lambda_{i}=\frac{1}{2}\sum_{\substack{i=1\\
\lambda_{i}\neq0
}
}^{n}u_{i}=\frac{1}{2}\sum_{K}\sum_{\substack{i=1\\
\lambda_{i}\neq0\\
\ker\pi_{i}=K
}
}^{n}u_{i}=\frac{1}{2}\sum_{\substack{K\\
\lambda+\ZZ^{n}\notin K
}
}\sum_{\substack{i=1\\
\ker\pi_{i}=K
}
}^{n}u_{i}
\]
where the outer sums are over all subgroups $K\in\left\{ \ker\pi_{i}:i=1,2,\ldots,n\right\} $.
By (\ref{eq:thm1_kernel_sum_to_zero}), the inner sums of the double
sum on the right always vanish, and therefore the whole expression
equals zero.
\end{proof}
\begin{proof}[Proof of the only if direction]
Let $u\in\RR^{n}$ with corresponding $h_{u}\in L^{2}(H)$ as defined
above, and assume that $\left\langle u,\lambda\right\rangle =0$ for
every $\lambda\in\Lambda\cap[0,1)^{n}$. For every pair $\lambda,\mu\in\Lambda\cap[0,1)^{n}$
such that $\lambda+\ZZ^{n}=-\mu+\ZZ^{n}$, we have
\begin{equation}
\sum_{\substack{i=1\\
\lambda_{i}\neq0
}
}^{n}u_{i}=\left(\sum_{i=1}^{n}u_{i}\lambda_{i}\right)+\left(\sum_{i=1}^{n}u_{i}\mu_{i}\right)=0\label{eq:sum_u_i_zero}
\end{equation}
by our assumption that both the terms in the middle vanish. Hence
\[
\frac{\left|H\right|}{2}\left\langle h_{u},S_{\lambda}\right\rangle =\sum_{i=1}^{n}u_{i}\lambda_{i}=0
\]
for every $\lambda\in\Lambda\cap[0,1)^{n}$ by equation (\ref{eq:alternate_ulambda}).
So by Theorem \ref{thm:dimensionMS}, $h_{u}$ is orthogonal to every
odd function in $L^{2}(H)$ and therefore must be an even function.
But $h_{u}$ is an odd function by definition. It follows $h_{u}$
must be the zero map, and therefore the relations (\ref{eq:thm1_group_into_pairs})
hold.

We next show that the relations (\ref{eq:thm1_kernel_sum_to_zero})
hold as well. From (\ref{eq:sum_u_i_zero}), we have
\[
\left(\sum_{i=1}^{n}u_{i}\right)-\sum_{\substack{i=1\\
\lambda_{i}=0
}
}^{n}u_{i}=0
\]
for all $\lambda\in\Lambda\cap[0,1)^{n}$ which implies
\[
\left(\sum_{i=1}^{n}u_{i}\right)\one_{\Lambda/\ZZ^{n}}-\sum_{i=1}^{n}u_{i}\one_{\ker\pi_{i}}=\mathbf{0}
\]
where $\one_{K}\in L^{2}(\Lambda/\ZZ^{n})$ denotes the indicator
function of the subgroup $K$ of $\Lambda/\ZZ^{n}$ and $\mathbf{0}$
denotes the zero map. We may rewrite this sum as
\[
\left(\sum_{i=1}^{n}u_{i}\right)\one_{\Lambda/\ZZ^{n}}-\sum_{K}\left(\sum_{\substack{i=1\\
\ker\pi_{i}=K
}
}^{n}u_{i}\right)\one_{K}=\mathbf{0}
\]
where the second sum is over all subgroups $K\in\left\{ \ker\pi_{i}:i=1,2,\ldots,n\right\} $. 

Let $e:\RR/\ZZ\rightarrow\CC^{\times}$ be the map $x+\ZZ\mapsto\exp\left(2\pi ix\right)$.
Then $e\circ\pi_{i}\in\widehat{\Lambda/\ZZ^{n}}$ and, moreover, $\ker\pi_{i}=\left\langle e\circ\pi_{i}\right\rangle ^{\perp}$
for each $i=1,2,\ldots,n$. We also have $\Lambda/\ZZ^{n}=\left\langle \chi_{0}\right\rangle ^{\perp}$,
where $\chi_{0}$ denotes the identity of $\widehat{\Lambda/\ZZ^{n}}$.
It follows that $(\ker\pi_{i})^{\perp}$ for $i=1,2,\ldots,n$ and
$(\Lambda/\ZZ^{n})^{\perp}$ are all cyclic subgroups of $\widehat{\Lambda/\ZZ^{n}}$.
By Lemma \ref{prop:01matrixinvertible}, then, the set of indicator
functions in the above linear combination are linearly independent.
We conclude each of the coefficients of the indicator functions above
are zero, and therefore the relations (\ref{eq:thm1_kernel_sum_to_zero})
hold. Note that there is no $\one_{\Lambda/\ZZ^{n}}$ term among the
sum of $\one_{K}$'s due to the assumption that $\ker\pi_{i}\neq\Lambda/\ZZ^{n}$
for every $i$.
\end{proof}

\section{\label{sec:Proof-of-First-Technical-Tool}Proof of Lemma \ref{prop:01matrixinvertible}}

Let $G$ be a finite abelian group. If $f\in L^{2}(G)$, we define
the Fourier transform $\hat{f}\in L^{2}(\widehat{G})$ by
\[
\hat{f}(\chi)=\left\langle f,\chi\right\rangle =\frac{1}{\left|G\right|}\sum_{g\in G}f\left(g\right)\overline{\chi\left(g\right)}
\]
for every $\chi\in\widehat{G}$. Since the characters of $G$ form
an orthonormal basis of $L^{2}(G)$, we have in particular that
\begin{equation}
\widehat{\psi}(\chi)=\left\langle \psi,\chi\right\rangle =\begin{cases}
1, & \psi=\chi\\
0, & \psi\neq\chi
\end{cases}\label{eq:ortho_characters}
\end{equation}
for every $\psi,\chi\in\widehat{G}$. 

Lemma \ref{prop:01matrixinvertible} is essentially a consequence
of the Poisson summation formula for finite abelian groups, stated
below. We refer the reader to \cite[Exercise 4.6]{conrad2010characters}
or \cite[Chapter 12]{terras_fourier_1999} for references. 
\begin{prop}[Poisson summation formula]
Let $G$ be a finite abelian group, let $f\in L^{2}(G)$, and let
$K$ be a subgroup of $G$. Then
\[
\frac{1}{\left|G\right|}\sum_{k\in K}f\left(k\right)=\frac{1}{\left|K^{\perp}\right|}\sum_{\chi\in K^{\perp}}\hat{f}\left(\chi\right).
\]
\end{prop}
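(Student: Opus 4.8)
The plan is to establish the identity by starting from the right-hand side, unfolding the definition of the Fourier transform, and reducing the whole computation to a single orthogonality relation for the characters lying in $K^{\perp}$. First I would substitute $\hat{f}(\chi)=\frac{1}{|G|}\sum_{g\in G}f(g)\overline{\chi(g)}$ into the sum $\frac{1}{|K^{\perp}|}\sum_{\chi\in K^{\perp}}\hat{f}(\chi)$ and interchange the two finite summations, which gives
\[
\frac{1}{|K^{\perp}|}\sum_{\chi\in K^{\perp}}\hat{f}(\chi)=\frac{1}{|G|}\sum_{g\in G}f(g)\,c(g),\qquad c(g):=\frac{1}{|K^{\perp}|}\sum_{\chi\in K^{\perp}}\overline{\chi(g)}.
\]
The problem then reduces entirely to evaluating the weight $c(g)$ and showing that it is the indicator function $\one_{K}$ of $K$.

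The crux, and the step I expect to be the main obstacle, is to prove that $c(g)=1$ for $g\in K$ and $c(g)=0$ otherwise. My approach is to identify $K^{\perp}$ with the character group $\widehat{G/K}$: by definition every $\chi\in K^{\perp}$ is trivial on $K$, hence factors as $\chi=\tilde{\chi}\circ q$ through the quotient map $q\colon G\rightarrow G/K$, and $\chi\mapsto\tilde{\chi}$ is a group isomorphism $K^{\perp}\xrightarrow{\sim}\widehat{G/K}$. In particular $|K^{\perp}|=|G/K|=|G|/|K|$, and $\overline{\chi(g)}=\overline{\tilde{\chi}(g+K)}$ depends only on the coset $g+K$. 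I would then invoke the standard character-sum relation on the finite abelian group $G/K$: for any finite abelian group $A$ and any $a\in A$, the sum $\sum_{\psi\in\widehat{A}}\psi(a)$ equals $|A|$ when $a$ is the identity and vanishes otherwise, the vanishing case following because any nonidentity $a$ admits a character $\psi_{0}$ with $\psi_{0}(a)\neq1$, so that multiplying the sum by $\psi_{0}(a)$ merely reindexes it and forces it to be zero. Applying this with $A=G/K$ and $a=g+K$ yields $\sum_{\chi\in K^{\perp}}\overline{\chi(g)}=|K^{\perp}|$ exactly when $g\in K$ and $0$ otherwise, whence $c(g)=\one_{K}(g)$.

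Finally I would substitute $c(g)=\one_{K}(g)$ back into the displayed identity: only the terms with $g\in K$ survive, so the right-hand side collapses to $\frac{1}{|G|}\sum_{k\in K}f(k)$, which is exactly the left-hand side. One small bookkeeping point worth noting is the interchange between $\overline{\chi(g)}$ and $\chi(g)$ throughout: since $\chi\mapsto\overline{\chi}$ is a bijection of $K^{\perp}$ onto itself, the conjugated character sum equals the ordinary one, so either form of the orthogonality relation may be used freely without affecting the argument.
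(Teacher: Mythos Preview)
Your argument is correct and is the standard one. Note, however, that the paper does not actually give a proof of this proposition: it simply states the Poisson summation formula and refers the reader to \cite[Exercise 4.6]{conrad2010characters} and \cite[Chapter 12]{terras_fourier_1999}. So there is no ``paper's own proof'' to compare against; the proposition is treated as background material. Your write-up would serve perfectly well as a self-contained proof, and it is essentially the argument one would find by working through the cited exercise.
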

\begin{lem}
Let $\mathcal{K}$ be a collection of subgroups of $G$ with the property
that $\{\one_{K^{\perp}}:K\in\mathcal{K}\}$ is linearly independent
in $L^{2}(\widehat{G})$. Then $\left\{ \one_{K}:K\in\mathcal{K}\right\} $
is linearly independent in $L^{2}(G)$. 
\end{lem}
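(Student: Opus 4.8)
The plan is to show that the Fourier transform sends each $\one_K$ to a nonzero scalar multiple of $\one_{K^\perp}$; since the Fourier transform is a linear isomorphism, it then carries any linear dependence among the $\one_K$ to one among the $\one_{K^\perp}$, and the hypothesis rules the latter out.

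First I would record the Fourier transform of the indicator function of a subgroup. Fix a subgroup $K$ of $G$ and a character $\psi\in\widehat{G}$. Applying the Poisson summation formula to the function $f=\overline{\psi}\in L^{2}(G)$, and evaluating $\widehat{\overline{\psi}}$ on $K^{\perp}$ by means of (\ref{eq:ortho_characters}), one obtains
\[
\widehat{\one_K}(\psi)=\frac{1}{|G|}\sum_{k\in K}\overline{\psi(k)}=\frac{1}{|K^{\perp}|}\,\one_{K^{\perp}}(\psi),
\]
where the last step also uses that $K^{\perp}$ is closed under inversion. Since $|K^{\perp}|=|G|/|K|$, this says $\widehat{\one_K}=\tfrac{|K|}{|G|}\,\one_{K^{\perp}}$. (The same identity follows at once from the orthogonality relations among the characters of $K$, without invoking Poisson summation.)

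Now suppose $\sum_{K\in\mathcal{K}}c_K\,\one_K=\mathbf{0}$ in $L^{2}(G)$ for scalars $c_K\in\CC$. Applying the (linear) Fourier transform together with the identity above yields
\[
\sum_{K\in\mathcal{K}}\frac{c_K\,|K|}{|G|}\,\one_{K^{\perp}}=\mathbf{0}
\]
in $L^{2}(\widehat{G})$. Here the indexing is legitimate because $K\mapsto K^{\perp}$ is injective on $\mathcal{K}$, as $K^{\perp\perp}=K$; in particular the functions $\one_{K^{\perp}}$, $K\in\mathcal{K}$, are pairwise distinct, and by hypothesis they are linearly independent. Hence $c_K|K|/|G|=0$, and therefore $c_K=0$, for every $K\in\mathcal{K}$. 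This establishes that $\{\one_K:K\in\mathcal{K}\}$ is linearly independent in $L^{2}(G)$.

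There is no substantive obstacle here: all the content sits in the identity $\widehat{\one_K}=(|K|/|G|)\,\one_{K^{\perp}}$, which is exactly a special case of Poisson summation, and the remainder is the general fact that an injective linear map preserves linear independence. The only thing to watch is bookkeeping — keeping the nonzero scalars $|K|/|G|$ correctly attached and the index set $\mathcal{K}$ matched up on both sides of the Fourier transform.
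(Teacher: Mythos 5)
Your proof is correct and takes essentially the same route as the paper's: both arguments reduce to the identity $\widehat{\one_{K}}=\tfrac{1}{|K^{\perp}|}\one_{K^{\perp}}$ (obtained from Poisson summation together with the orthogonality relation (\ref{eq:ortho_characters})) and then push the hypothetical linear dependence through the Fourier transform. Your parenthetical remark that the identity also follows directly from orthogonality of characters of $K$, and your explicit check that $K\mapsto K^{\perp}$ is injective via $K^{\perp\perp}=K$, are minor refinements of the same argument.
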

\begin{proof}
Let $\mathcal{K}$ be such a collection, and suppose
\[
\sum_{K\in\mathcal{K}}\alpha_{K}\one_{K}=0
\]
for some complex numbers $\alpha_{K}$ for $K\in\mathcal{K}$. Thus
for any character $\psi\in\widehat{G}$, we obtain
\[
\sum_{K\in\mathcal{K}}\frac{\alpha_{K}}{\left|K^{\perp}\right|}\sum_{\chi\in K^{\perp}}\widehat{\psi}\left(\chi\right)=\sum_{K\in\mathcal{K}}\frac{\alpha_{K}}{\left|G\right|}\sum_{k\in K}\psi\left(k\right)=\left\langle \sum_{K\in\mathcal{K}}\alpha_{K}\one_{K},\overline{\psi}\right\rangle =0
\]
by the Poisson summation formula. On the other hand, by (\ref{eq:ortho_characters}),
the left hand side simplifies to
\[
\sum_{\substack{K\in\mathcal{K}\\
\psi\in K^{\perp}
}
}\frac{\alpha_{K}}{\left|K^{\perp}\right|}=\sum_{K\in\mathcal{K}}\frac{\alpha_{K}}{\left|K^{\perp}\right|}\one_{K^{\perp}}(\psi).
\]
It follows that the linear combination of functions
\[
\sum_{K\in\mathcal{K}}\frac{\alpha_{K}}{\left|K^{\perp}\right|}\one_{K^{\perp}}\in L^{2}(\widehat{G})
\]
is the zero function. Since the functions $\left\{ \one_{K^{\perp}}:K\in\mathcal{K}\right\} $
are assumed to be linearly independent, we get that each $\alpha_{K}=0$
which is what we wanted to show.
\end{proof}
Recall Lemma \ref{prop:01matrixinvertible}, which claims $\{\one_{K}\;:\;K^{\perp}\mathrm{\;is\;a\;cyclic\;subgroup\;of\;}\widehat{G}\}$
is linearly independent in $L^{2}(G).$
\begin{proof}[Proof of Lemma \ref{prop:01matrixinvertible}]
Let $\mathcal{K}=\{\left\langle \chi\right\rangle ^{\perp}:\chi\in\widehat{G}\}.$
By the preceding lemma, it suffices to show that set of functions
\[
\{\one_{K^{\perp}}:K\in\mathcal{K}\}=\{\one_{\left\langle \chi\right\rangle }:\chi\in\widehat{G}\}
\]
is linearly independent in $L^{2}(\widehat{G})$. 

The cyclic subgroups of $\widehat{G}$ form a partially ordered set
with respect to inclusion. Hence, by taking any linear extension of
this poset, we enumerate these subgroups as $\left\langle \chi_{1}\right\rangle ,\left\langle \chi_{2}\right\rangle ,\ldots,\left\langle \chi_{n}\right\rangle $
in such a way that $i<j$ implies there is an element of $\left\langle \chi_{j}\right\rangle $
not in $\left\langle \chi_{i}\right\rangle $. This implies that the
matrix
\[
A_{i,j}=\begin{cases}
1, & \chi_{i}\in\left\langle \chi_{j}\right\rangle \\
0, & \mbox{otherwise},
\end{cases}
\]
where $1\leq i,j\leq n$, is upper triangular with ones along the
diagonal. It follows that the functions $\one_{\left\langle \chi\right\rangle }:\chi\in\widehat{G}$
are linearly independent, as they are linearly independent when restricted
to $\left\{ \chi_{1},\ldots,\chi_{n}\right\} $.
\end{proof}

\section{\label{sec:Proof-of-Second-Technical-Tool}Proof of Theorem \ref{thm:dimensionMS}}

Let $G$ be a finite abelian group written additively, and let $H=\Hom(G,\RR/\ZZ)$.
We wish to show that the space $\Ltwoodd(H)$ of odd functions $f:H\rightarrow\CC$
is spanned by the functions $S_{g}:H\rightarrow\CC$ defined by $S_{g}(\phi)=B_{1}(\phi(g))$
for each $g\in G$. The proof of this statement is outlined in this
section, and follows the methods of \cite{reid_young_1985,morrison_terminal_1984}
by explicitly finding $\dim(L_{\mathrm{odd}}^{2}(H))$ many linearly
independent vectors in $\spann(S_{g}:g\in G)$. 

\subsection{Some preliminaries}

By the structure theorem for finitely generated abelian groups, $G\simeq H$
is isomorphic to an additive group of the form
\[
\bigoplus_{i=1}^{m}\ZZ/r_{i}
\]
where $r_{1},r_{2},\ldots,r_{m}$ are positive integers such that
$m\geq1$ and $r_{1}\mid r_{2}\mid\cdots\mid r_{m}$. Now fix a minimal
set of generators $\left\{ g_{1},\ldots,g_{m}\right\} $ of $G$,
so that every element $g\in G$ can be written uniquely as $a_{1}g_{1}+\cdots+a_{m}g_{m}$
for some integers $a_{1},\ldots,a_{m}$ satisfying $0\leq a_{i}<r_{i}$
for $i=1,2,\ldots,m$. Then the maps $\phi_{i}\in H$ for $i=1,2,\ldots,m$
defined by 
\[
\phi_{i}(g_{j})=\begin{cases}
0+\ZZ & i\neq j\\
\frac{1}{r_{i}}+\ZZ & i=j
\end{cases}
\]
are a minimal generating set for $H$ in that every $\phi\in H$ can
be written uniquely as $c_{1}\phi_{1}+\cdots+c_{m}\phi_{m}$ for some
integers $c_{1},\ldots,c_{m}$ satisfying $0\leq c_{i}<r_{i}$ for
$i=1,2,\ldots,m$. Moreover, given $g\in G$ and $\phi\in H$, if
$g=a_{1}g_{1}+\cdots+a_{m}g_{m}$ and $\phi=c_{1}\phi_{1}+\cdots+c_{m}\phi_{m}$,
then
\[
\phi\left(g\right)=\frac{a_{1}c_{1}}{r_{1}}+\cdots+\frac{a_{m}c_{m}}{r_{m}}+\ZZ.
\]
Now let $R$ denote the ring $\ZZ/r_{1}\oplus\cdots\oplus\ZZ/r_{m}$
with componentwise multiplication, so that the additive group of $R$
is isomorphic to $G$. For each $a=(a_{1},\ldots,a_{m})\in R$, define
the function $S_{a}:R\rightarrow\CC$ by
\[
S_{a}\left(c\right)=B_{1}\left(\frac{a_{1}c_{1}}{r_{1}}+\cdots+\frac{a_{m}c_{m}}{r_{m}}\right)
\]
for each $c=(c_{1},\ldots,c_{m})\in R$.

As before, let $\Ltwoodd(R)$ denote the space of functions $f:R\rightarrow\CC$
satisfying $f(-a)=-f(a)$ for all $a\in R$. Theorem \ref{thm:dimensionMS},
then, is established by proving the following proposition:
\begin{prop}
\label{prop:span_odd_functions_L2R}The functions in $\{S_{a}:a\in R\}$
span $L_{\mathrm{odd}}^{2}(R)$.
\end{prop}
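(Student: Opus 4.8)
The strategy is to count: compute $\dim L^2_{\mathrm{odd}}(R)$ exactly, then exhibit that many linearly independent functions among the $S_a$. Since $L^2_{\mathrm{odd}}(R)$ is the $(-1)$-eigenspace of the involution $f\mapsto(a\mapsto f(-a))$ on $L^2(R)$, its dimension is the number of characters $\chi$ of $R$ (equivalently, elements of $H\cong R$) with $\chi\neq\overline\chi$, divided by two, i.e. $\tfrac12(|R|-e_2(R))$ where $e_2(R)$ is the number of $2$-torsion elements (those $a$ with $a=-a$). Equivalently, working in the Fourier basis, $L^2_{\mathrm{odd}}(R)$ has an orthogonal basis indexed by unordered pairs $\{a,-a\}$ with $a\neq-a$. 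So the target dimension is $N:=\tfrac12\bigl(\prod_i r_i - \prod_i \gcd(2,r_i)\bigr)$.

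\textbf{Producing enough functions.} Following Reid and Morrison--Stevens, I would not work directly with $S_a$ but pass to a more flexible family obtained by Dirichlet convolution / Möbius-type manipulations. Concretely, for each $a\in R$ consider not just $S_a$ but the functions $c\mapsto B_1(\phi_a(c)/d)$ built from divisors; better, use multiplicativity to reduce to the case where each $r_i$ is a prime power and then to a single cyclic factor $\ZZ/r$, where $S_a(c)=B_1(ac/r)$. The classical fact (this is where Dirichlet's theorem $L(1,\chi)\neq0$ enters, as flagged in the excerpt) is that on $\ZZ/r$ the functions $c\mapsto B_1(ac/r)$ for $a$ ranging over $\ZZ/r$ span the odd functions on $\ZZ/r$: their Fourier coefficients are, up to nonzero constants, values $L(1,\overline\chi)$-type Gauss-sum expressions which are all nonzero for odd characters $\chi$, hence the $S_a$ already have full support on the odd part of the spectrum and one can invert. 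For the general $R=\bigoplus \ZZ/r_i$ one tensors these one-dimensional statements together: the Fourier transform of $S_a$ on $R$ factors as a product over $i$ of the one-variable Gauss-sum quantities, and a character $\psi=(\psi_1,\dots,\psi_m)$ of $R$ is ``odd'' (i.e. $\psi\neq\overline\psi$) iff at least one $\psi_i$ is odd on $\ZZ/r_i$; the product of the per-coordinate coefficients is then nonzero, so a suitable linear combination of the $S_a$ realizes each odd Fourier character, giving the $N$ independent functions needed.

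\textbf{Main obstacle.} The crux is the nonvanishing input: one must compute $\widehat{S_a}(\psi)=\langle S_a,\psi\rangle$ and recognize it, via the Fourier expansion of $B_1$ on $\ZZ/r$, as essentially $\sum_{t}\tfrac{\psi_{\text{stuff}}}{t}$, i.e. a special value of a (possibly imprimitive) Dirichlet $L$-function at $s=1$ twisted by a Gauss sum, and then invoke $L(1,\chi)\neq0$ to conclude it is nonzero precisely when the relevant character is nontrivial and odd. Handling \emph{imprimitive} characters and the divisor bookkeeping when $r$ is not squarefree — so that the naive Gauss-sum formula degenerates — is the delicate part; the clean way around it, and the reason the excerpt cites \cite[Section 9.2]{montgomery_multiplicative_2006}, is to first reduce $B_1 \bmod r$ to a sum of $B_1$ at primitive moduli dividing $r$ (a Hurwitz--Dirichlet type identity), so that only primitive odd characters appear and the standard nonvanishing applies directly. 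Once that reduction is in place, assembling the tensor product over the $r_i$ and matching the count $\dim L^2_{\mathrm{odd}}(R)=N$ is routine linear algebra. The remaining sections of the paper presumably carry out exactly this: the primitive-modulus reduction, the Gauss-sum computation, and the appeal to Dirichlet.
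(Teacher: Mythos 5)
Your dimension count for the target space is exactly the paper's: $\dim\Ltwoodd(R)=\tfrac{1}{2}\left(|R|-2^{s}\right)$ with $2^{s}=\prod_{i}\gcd(2,r_{i})$ counting the $2$-torsion, and the overall architecture (match this count with that many independent functions in $\spann(S_{a}:a\in R)$, with $L(1,\chi)\neq0$ as the nonvanishing input) is also the paper's. The gap is in how you propose to produce the independent functions. First, the reduction ``to a single cyclic factor'' by tensoring is not available: $S_{a}(c)=B_{1}\left(\sum_{i}a_{i}c_{i}/r_{i}\right)$ is $B_{1}$ of a \emph{sum}, not a product of functions of the separate coordinates, so $S_{a}$ is not a tensor product over the factors $\ZZ/r_{i}$ and its Fourier transform on $R$ does not factor as a product of one-variable Gauss-sum quantities. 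Second, the claim that the $S_{a}$ ``already have full support on the odd part of the spectrum'' is false: since $S_{a}=B_{1}\circ\phi_{a}$ is constant on cosets of $\ker\phi_{a}$, its Fourier transform is supported on the cyclic subgroup of $\widehat{R}$ generated by the character corresponding to $a$. And even where coefficients are nonzero, nonvanishing of individual Fourier coefficients does not by itself let you ``invert'': you would need a specific matrix of Fourier coefficients, indexed by $a$ and by odd character pairs, to have full rank, and that is the actual content of the theorem rather than routine linear algebra.

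What supplies both the factorization and the invertibility in the paper is the \emph{multiplicative} structure: $L^{2}(R)$ is decomposed into eigenspaces $\varepsilon_{\chi}$ for the dilation action of $R^{\times}$, indexed by tuples of Dirichlet characters $\chi=(\chi_{1},\ldots,\chi_{m})$ rather than by additive characters of $R$, and the averaged functions $w_{\chi,a}=\sum_{b\in R^{\times}}\chi(b)S_{ab}$ are shown in Proposition \ref{prop:what_is_w_chi_a_c} to equal a single sum $\frac{i}{\pi}\sum_{k\geq1}\frac{1}{k}\prod_{i}(\cdots)$ of per-coordinate factors; this is where Gauss sums and conductors enter. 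For each odd $\chi$ one then needs $\divisor(q_{\chi})$ independent functions $w_{\chi,a}$ with $a\mid q_{\chi}=r/f_{\chi}$ (Proposition \ref{thm:MReigenspaces}); this is proved via a M\"obius-modified family $v_{\chi,a}$, an ordering of divisors making the matrix $(v_{\chi,a}(c))$ anti-triangular (Propositions \ref{prop:Ordering} and \ref{prop:v_chi_a_c_equals_0}), and an Euler-product computation showing the anti-diagonal entries are nonzero multiples of $L(1,\overline{\chi})$ (Proposition \ref{prop:v_chi_a_c_neq_0}). The imprimitivity and divisor bookkeeping you flag as ``delicate'' is in fact the bulk of the proof, and your sketch does not contain a mechanism that would replace it.
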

Before proceeding with the proof of Proposition \ref{prop:span_odd_functions_L2R},
we review the notion of Dirichlet characters and establish the notation
to be used in the remainder of this section. A reference can be found
in \cite[Section 9.1]{montgomery_multiplicative_2006}.

\subsection{Dirichlet characters}

Let $G=\left(\ZZ/r\right)^{\times}$ for some positive integer $r$.
Then each character $\chi:G\rightarrow\CC^{\times}$ extends to a
completely multiplicative function $\chi:\ZZ\rightarrow\CC$ by setting
\[
\chi\left(n\right):=\begin{cases}
\chi\left(n+r\ZZ\right), & \gcd\left(n,r\right)=1\\
0, & \mbox{otherwise}
\end{cases}
\]
for each integer $n$. A function $\chi:\ZZ\rightarrow\CC$ is called
a \emph{Dirichlet character} if it is constructed in this manner for
some $r\geq1$ and some $\chi\in\widehat{\left(\ZZ/r\right)^{\times}}$.
The number $r$ is called the \emph{modulus} of $\chi$. We define
an equivalence relation $\sim$ on Dirichlet characters by declaring
$\chi_{1}\sim\chi_{2}$ if and only if they agree on their mutual
support. A Dirichlet character $\chi$ is called \emph{primitive}
if the support of $\chi$ contains the support of every other Dirichlet
character in the equivalence class $\left[\chi\right]$. Given a Dirichlet
character $\chi$, there exists a unique primitive Dirichlet character
in the equivalence class $\left[\chi\right]$ and it is denoted $\chi^{*}$.
A primitive character $\chi^{*}$ is said to \emph{induce} a Dirichlet
character $\psi$ if $\psi\in\left[\chi^{*}\right]$. If $\chi$ is
a Dirichlet character, then the modulus of $\chi^{*}$ is called the
\emph{conductor} of $\chi$.

\subsection{Notation}

We outline the notation used in the remainder of this section.

\subsubsection{Arithmetic functions}

Let $\NN$ denote the positive integers.
\begin{itemize}
\item $\nu_{p}:\NN\rightarrow\ZZ$ denotes the $p$-adic valuation: $\nu_{p}(k)$
is the largest exponent $\alpha$ such that $p^{\alpha}\mid k$.
\item $\divisor:\NN\rightarrow\NN$ counts the number of divisors of an
integer: we have $\divisor(k)=\prod_{p}\left(\nu_{p}(k)+1\right)$
for all $k\geq1$ where the product is over all primes $p$.
\item $\mu:\NN\rightarrow\ZZ$ is the M\"obius function.
\item $\varphi:\NN\rightarrow\NN$ is the Euler-phi function.
\item We write $\left(k,k'\right)$ for the greatest common divisor of $k$
and $k'$.
\end{itemize}
For $a=\left(a_{1},\ldots,a_{m}\right)\in\NN^{m}$, we also define
\begin{itemize}
\item $\divisor(a):=\divisor(a_{1})\divisor(a_{2})\cdots\divisor(a_{m}).$
\item $\mu(a):=\mu(a_{1})\mu(a_{2})\cdots\mu(a_{m})$.
\item $\varphi(a):=\varphi(a_{1})\varphi(a_{2})\cdots\varphi(a_{m})$.
\end{itemize}

\subsubsection{Dirichlet characters}

Let $R=\oplus_{i=1}^{m}\ZZ/r_{i}$ be the ring defined above. The
multiplicative group of units of $R$ is given by
\[
R^{\times}=\bigoplus_{i=1}^{m}\left(\ZZ/r_{i}\right)^{\times}.
\]
 Let $\widehat{R^{\times}}$ denote the group of characters of $R^{\times}$.
Each $\chi\in\widehat{R^{\times}}$ corresponds uniquely to a tuple
$\left(\chi_{1},\ldots,\chi_{m}\right)$ for which $\chi_{i}\in\widehat{\left(\ZZ/r_{i}\right)^{\times}}$
for $i=1,2,\ldots,m$, and
\[
\chi\left(a\right)=\chi_{1}\left(a_{1}\right)\cdots\chi_{m}\left(a_{m}\right)
\]
for each $a=\left(a_{1},\ldots,a_{m}\right)\in R^{\times}$. For a
character $\chi_{i}:(\ZZ/r_{i})^{\times}\rightarrow\CC^{\times}$,
we denote the corresponding Dirichlet character by $\chi_{i}:\ZZ\rightarrow\CC$.
For $\chi=\left(\chi_{1},\ldots,\chi_{m}\right)\in\widehat{R^{\times}}$,
we define
\begin{itemize}
\item $\chi^{\phantom{*}}:\ZZ^{m}\rightarrow\CC$ by $\chi^{\phantom{*}}(a_{1},\ldots,a_{m})=\chi_{1}\left(a_{1}\right)\chi_{2}\left(a_{2}\right)\cdots\chi_{m}\left(a_{m}\right).$
\item $\chi^{*}:\ZZ^{m}\rightarrow\CC$ by $\chi^{*}(a_{1},\ldots,a_{m})=\chi_{1}^{*}\left(a_{1}\right)\chi_{2}^{*}\left(a_{2}\right)\cdots\chi_{m}^{*}\left(a_{m}\right).$
\end{itemize}
Here we are denoting by $\chi_{i}^{*}:\ZZ\rightarrow\CC$ the primitive
Dirichlet character inducing $\chi_{i}:\ZZ\rightarrow\CC$.

\subsubsection{Parameters associated with $R$.}

For the ring $R$ defined above, and for each $\chi=\left(\chi_{1},\ldots\chi_{m}\right)\in\widehat{R^{\times}}$,
we define
\begin{itemize}
\item $r:=(r_{1},\ldots,r_{m})$.
\item $f_{\chi}:=\left(f_{\chi_{1}},\ldots,f_{\chi_{m}}\right)$ where $f_{\chi_{i}}$
is the conductor of $\chi_{i}:\ZZ\rightarrow\CC$. 
\item $q_{\chi}:=\left(r_{1}/f_{\chi_{1}},\ldots,r_{m}/f_{\chi_{m}}\right)$.
\end{itemize}

\subsubsection{Everything else.}

For two tuples of integers $a=\left(a_{1},\ldots,a_{m}\right),c=\left(c_{1},\ldots,c_{m}\right)\in\ZZ^{m}$,
we write $ac$ to denote the componentwise product $(a_{1}c_{1},\ldots,a_{m}c_{m})$.
If $a$ and $c$ have positive components, then we write $a\mid c$
and say $a$ \emph{divides} $c$ if $a_{i}\mid c_{i}$ for all $i=1,2,\ldots,m$.
If $a$ divides $c$, then we let $c/a:=\left(c_{1}/a_{1},\ldots,c_{m}/a_{m}\right)$.
Thus, for instance, $q_{\chi}=r/f_{\chi}$ where $r,q_{\chi},f_{\chi}$
are as above.

If $g,h:\NN^{m}\rightarrow\CC$, then let $*$ denote Dirichlet convolution
over $\NN^{m}$: 
\[
\left(g*h\right)\left(a\right):=\sum_{d\mid a}g\left(d\right)h\left(a/d\right)\quad\text{for all\;}a\in\NN^{m}.
\]

\subsection{A decomposition of $R$}

The group of units $R^{\times}$ of the ring $R$ acts on $L^{2}(R)$
as follows: for a given $f\in L^{2}(R)$, $c\in R^{\times}$, the
function $c\cdot f\in L^{2}(R)$ is defined so that
\[
(c\cdot f)(a)=f(ca)
\]
for each $a\in R$. An eigenvalue, eigenfunction pair $\left(\chi,w\right)$
of the action consists of a function $\chi:R^{\times}\rightarrow\CC$
and a nonzero function $w\in L^{2}(R)$ such that for every $c\in R^{\times}$,
\[
c\cdot w=\chi\left(c\right)w.
\]
The vector space $L^{2}(R)$ can be decomposed into a direct sum
\begin{equation}
L^{2}(R)=\bigoplus_{\chi\in\widehat{R^{\times}}}\varepsilon_{\chi}\label{eq:decomp}
\end{equation}
where, for each $\chi\in\widehat{R^{\times}}$, we denote the subspace
of eigenfunctions corresponding to $\chi$ by $\varepsilon_{\chi}$.
\begin{prop}
Let $\chi\in\widehat{R^{\times}}$. Then
\[
\varepsilon_{\chi}=\left\{ \frac{1}{\left|R^{\times}\right|}\sum_{b\in R^{\times}}\overline{\chi}(b)(b\cdot w):w\in L^{2}(R)\right\} .
\]
\end{prop}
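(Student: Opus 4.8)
The plan is to recognize the set on the right-hand side as the image of the standard averaging operator onto the $\chi$-isotypic component of $L^{2}(R)$ for the action of $R^{\times}$, and then to verify directly that this image coincides with $\varepsilon_{\chi}=\{w\in L^{2}(R):c\cdot w=\chi(c)w\text{ for all }c\in R^{\times}\}$. Accordingly, I would introduce the linear operator $P_{\chi}\colon L^{2}(R)\to L^{2}(R)$ defined by
\[
P_{\chi}w=\frac{1}{\left|R^{\times}\right|}\sum_{b\in R^{\times}}\overline{\chi}(b)\,(b\cdot w),
\]
so that the right-hand side of the proposition is literally $\{P_{\chi}w:w\in L^{2}(R)\}$. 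Before anything else I would record that $c\mapsto(w\mapsto c\cdot w)$ is a \emph{genuine action} of the abelian group $R^{\times}$: from $(c\cdot f)(a)=f(ca)$ one computes $(c\cdot(b\cdot f))(a)=(b\cdot f)(ca)=f(bca)=((cb)\cdot f)(a)$, hence $c\cdot(b\cdot f)=(cb)\cdot f$.

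First I would prove the inclusion of the right-hand side in $\varepsilon_{\chi}$, i.e.\ that $P_{\chi}w\in\varepsilon_{\chi}$ for every $w$. Using the action identity and then re-indexing the sum via the bijection $b\mapsto c^{-1}b$ of $R^{\times}$,
\[
c\cdot(P_{\chi}w)=\frac{1}{\left|R^{\times}\right|}\sum_{b\in R^{\times}}\overline{\chi}(b)\,\bigl((cb)\cdot w\bigr)=\frac{1}{\left|R^{\times}\right|}\sum_{b\in R^{\times}}\overline{\chi}(c^{-1}b)\,(b\cdot w)=\chi(c)\,P_{\chi}w,
\]
where the final equality uses $\overline{\chi}(c^{-1}b)=\overline{\chi}(c^{-1})\,\overline{\chi}(b)=\chi(c)\,\overline{\chi}(b)$, which holds because $\chi(c)$ lies on the unit circle and so $\overline{\chi}(c^{-1})=\overline{\chi(c)^{-1}}=\chi(c)$. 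Thus $c\cdot(P_{\chi}w)=\chi(c)\,P_{\chi}w$ for all $c\in R^{\times}$, so $P_{\chi}w\in\varepsilon_{\chi}$.

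Conversely, I would show that $P_{\chi}$ restricts to the identity on $\varepsilon_{\chi}$, which gives the reverse inclusion. If $w\in\varepsilon_{\chi}$ then $b\cdot w=\chi(b)w$ for every $b\in R^{\times}$, so
\[
P_{\chi}w=\frac{1}{\left|R^{\times}\right|}\sum_{b\in R^{\times}}\overline{\chi}(b)\chi(b)\,w=\frac{1}{\left|R^{\times}\right|}\sum_{b\in R^{\times}}\left|\chi(b)\right|^{2}w=w,
\]
again using $\left|\chi(b)\right|=1$. Hence every $w\in\varepsilon_{\chi}$ is of the form $P_{\chi}w$, and the two inclusions together yield the claimed equality. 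I do not anticipate any serious obstacle here: the proof is essentially an averaging argument, and the only points that require attention are keeping the direction of the action straight (so that the substitution $b\mapsto c^{-1}b$ produces the factor $\chi(c)$ rather than $\overline{\chi}(c)$) and the elementary fact that a character of a finite abelian group has values of modulus $1$.
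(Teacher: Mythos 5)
Your proof is correct and is essentially the paper's own argument: the paper likewise shows the average $\frac{1}{|R^{\times}|}\sum_{b}\overline{\chi}(b)(b\cdot w)$ is a $\chi$-eigenfunction by the substitution $b\mapsto cb$, and notes that an eigenfunction with eigenvalue $\chi$ equals its own such average. Your write-up merely makes the projection operator $P_{\chi}$ and the action identity $c\cdot(b\cdot f)=(cb)\cdot f$ explicit.
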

\begin{proof}
If $w$ is an eigenfunction with eigenvalue $\chi$, then $w$ equals
the average of $\overline{\chi}(b)(b\cdot w)$ over all $b\in R^{\times}$.
Conversely, if $w\in L^{2}(R)$, then
\begin{align*}
c\cdot\sum_{b\in R^{\times}}\overline{\chi}(b)(b\cdot w)=\sum_{b\in R^{\times}}\overline{\chi}(b)(cb\cdot w) & =\chi(c)\sum_{b\in R^{\times}}\overline{\chi}(cb)(cb\cdot w)\\
 & =\chi(c)\sum_{b\in R^{\times}}\overline{\chi}(b)(b\cdot w).
\end{align*}
\end{proof}
We say that a character $\chi\in\widehat{R^{\times}}$ is \emph{even}
if $\chi\left(-1,\ldots,-1\right)=1$ and \emph{odd} if $\chi\left(-1,\ldots,-1\right)=-1$;
note that these are the only two possible values for $\chi\left(-1,\ldots,-1\right)$
since
\[
\left(\chi\left(-1,\ldots,-1\right)\right)^{2}=\chi(\left(-1,\ldots,-1\right)^{2})=\chi\left(1,\ldots,1\right)=1.
\]
Observe that the functions in $\varepsilon_{\chi}$ are odd if and
only if $\chi$ is odd. 

For a given $\chi\in\widehat{R^{\times}}$ and $a'\in R$, let
\[
w_{\chi,a'}:=\sum_{b\in R^{\times}}\chi(b)S_{a'b}\in\varepsilon_{\chi}.
\]
If $a\in\ZZ^{m}$ and $a'\in R$ is the image of $a$ under the canonical
map $\ZZ^{m}\rightarrow R$, then we also define $w_{\chi,a}:=w_{\chi,a'}$.

The next theorem, proved by Reid in \cite{reid_young_1985} for the
case when $m=1$, finds a basis of $\varepsilon_{\chi}$ in terms
of these $w_{\chi,a}$ when $\chi$ is odd.
\begin{prop}[{c.f. \cite[Theorem 5.13]{reid_young_1985}}]
\label{thm:MReigenspaces} For each odd character $\chi\in\widehat{R^{\times}}$,
there are $\divisor(q_{\chi})$ functions in $\{w_{\chi,a}:a\in\NN^{m},a\mid q_{\chi}\}$
and they are linearly independent.
\end{prop}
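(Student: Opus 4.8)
The plan is to show that the $\divisor(q_{\chi})$ functions $w_{\chi,a}$, for $a\in\NN^{m}$ with $a\mid q_{\chi}$, form a basis of the eigenspace $\varepsilon_{\chi}$, by computing their values on a convenient basis of $\varepsilon_{\chi}$ and checking that the resulting matrix is nonsingular. First I would pin down $\dim\varepsilon_{\chi}$. The group $R^{\times}$ acts on $R$ with orbits indexed by the divisors $d$ of $r$, the orbit $O_{d}$ of $d$ being $\{a\in R:(a_{i},r_{i})=d_{i}\text{ for all }i\}$; for $w\in\varepsilon_{\chi}$ the restriction $w|_{O_{d}}$ is determined by the single value $w(d)$, and it can be nonzero only if $\chi$ is trivial on the stabilizer of $d$, i.e. only if $f_{\chi_{i}}\mid r_{i}/d_{i}$ for every $i$, equivalently $d\mid q_{\chi}$. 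Hence the functions $e_{d}\in\varepsilon_{\chi}$ (for $d\mid q_{\chi}$) supported on $O_{d}$ and normalized by $e_{d}(d)=1$ form a basis of $\varepsilon_{\chi}$, so that $\dim\varepsilon_{\chi}=\divisor(q_{\chi})$ and every $w\in\varepsilon_{\chi}$ satisfies $w=\sum_{d\mid q_{\chi}}w(d)\,e_{d}$. Since each $w_{\chi,a}$ lies in $\varepsilon_{\chi}$, the functions $w_{\chi,a}$ (for $a\mid q_{\chi}$) are linearly independent---and in particular pairwise distinct, so there really are $\divisor(q_{\chi})$ of them---if and only if the matrix $M=\big(w_{\chi,a}(d)\big)_{a,d\mid q_{\chi}}$ is nonsingular.

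Next I would compute $M_{a,d}=w_{\chi,a}(d)=\sum_{b\in R^{\times}}\chi(b)\,B_{1}\!\big(\textstyle\sum_{i}a_{i}b_{i}d_{i}/r_{i}\big)$. Expanding $B_{1}$ in its Fourier series $B_{1}(x)=-\tfrac{1}{2\pi i}\sum_{k\neq0}k^{-1}e(kx)$ and using that $\chi$ is odd, so that the terms for $k$ and $-k$ coincide (via $\sum_{b\in(\ZZ/r_{i})^{\times}}\chi_{i}(b)e(-bn/r_{i})=\chi_{i}(-1)\sum_{b}\chi_{i}(b)e(bn/r_{i})$ together with $\prod_{i}\chi_{i}(-1)=\chi(-1,\dots,-1)=-1$), one gets
\[
w_{\chi,a}(d)=-\frac{1}{\pi i}\sum_{k\geq1}\frac{1}{k}\prod_{i=1}^{m}G_{i}(k\,a_{i}d_{i}),\qquad G_{i}(n):=\sum_{b\in(\ZZ/r_{i})^{\times}}\chi_{i}(b)\,e(bn/r_{i}).
\]
The crucial input is the closed-form evaluation of the generalized Gauss sums $G_{i}(n)$, which is exactly what \cite[\S9.2]{montgomery_multiplicative_2006} provides: $G_{i}(n)$ vanishes unless $\gcd\!\big(n/(n,q_{\chi_{i}}),f_{\chi_{i}}\big)=1$, and is otherwise a Ramanujan-type rational factor built from $\mu$ and $\varphi$, times $\overline{\chi_{i}^{*}}$ evaluated at the prime-to-$q_{\chi_{i}}$ part of $n$, times a twist of $\chi_{i}^{*}$ at a divisor of $q_{\chi_{i}}$, times the Gauss sum $\tau(\chi_{i}^{*})$.

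Substituting this and regrouping the sum over $k$---splitting $k$ into the part supported on the primes dividing $\prod_{i}r_{i}$ and the coprime part---turns the $k$-sum into an Euler product. The coprime part contributes a nonzero Euler-factor correction times $L\big(1,\overline{\prod_{i}\chi_{i}^{*}}\big)$, which is nonzero by Dirichlet's theorem because $\prod_{i}\chi_{i}^{*}$ is nontrivial---it is odd, since $\big(\prod_{i}\chi_{i}^{*}\big)(-1)=\chi(-1,\dots,-1)=-1$---while $\prod_{i}\tau(\chi_{i}^{*})\neq0$. The upshot is a product formula
\[
w_{\chi,a}(d)=c_{\chi}\cdot\prod_{\substack{1\le i\le m\\ p\mid r_{i}}}\ell_{i,p}\big(\nu_{p}(a_{i}),\nu_{p}(d_{i})\big),
\]
where $c_{\chi}\neq0$ is a constant independent of $a,d$ (absorbing the $L$-value and the Gauss sums) and each $\ell_{i,p}$ is an explicit local factor.

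It then follows that $M=c_{\chi}\cdot\bigotimes_{(i,p)}L_{i,p}$, the tensor product over the pairs $(i,p)$ with $p\mid r_{i}$ of the local matrices $L_{i,p}=\big(\ell_{i,p}(\alpha,\delta)\big)_{0\le\alpha,\delta\le\nu_{p}(q_{\chi_{i}})}$. Each $L_{i,p}$ is nonsingular: when $p\mid f_{\chi_{i}}$ it is anti-triangular with nonzero antidiagonal, the antidiagonal entries being nonzero local Bernoulli-type values, exactly as in Reid's single-coordinate computation; when $p\nmid f_{\chi_{i}}$, the factor $\ell_{i,p}(\alpha,\delta)$ is a fixed nonzero constant whenever $\alpha+\delta\ge\nu_{p}(q_{\chi_{i}})$ and is $0$ otherwise, so $L_{i,p}$ is a scalar multiple of a matrix with entries in $\{0,1\}$ that is manifestly nonsingular. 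Hence $\det M\neq0$, proving the proposition. The step I expect to be the main obstacle is obtaining the product formula for $w_{\chi,a}(d)$: unlike Reid's single-coordinate setting in \cite[Theorem 5.13]{reid_young_1985}, the $m$ coordinates are not visibly decoupled, since $B_{1}$ of a sum is not a product, and the decoupling emerges only after pushing the computation through the Gauss-sum identities of \cite[\S9.2]{montgomery_multiplicative_2006} and reorganizing the $k$-sum as an Euler product; one must track conductors and greatest common divisors carefully throughout, and separately check that each local matrix $L_{i,p}$ is nonsingular when $q_{\chi}$ is not squarefree.
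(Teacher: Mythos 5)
Your setup is sound and in one respect goes beyond what the paper records: the orbit analysis showing that a function in $\varepsilon_{\chi}$ is supported on the $R^{\times}$-orbits of the divisors $d\mid q_{\chi}$ and is determined by its values there is correct, it gives $\dim\varepsilon_{\chi}=\divisor(q_{\chi})$, and it correctly reduces the proposition to the nonsingularity of $M=\left(w_{\chi,a}(c)\right)_{a,c\mid q_{\chi}}$. The evaluation of $w_{\chi,a}(c)$ via the Fourier expansion of $B_{1}$ and the Gauss-sum identities of \cite[Section 9.2]{montgomery_multiplicative_2006} is also the paper's route (Proposition \ref{prop:what_is_w_chi_a_c}). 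The gap is the asserted factorization $w_{\chi,a}(c)=c_{\chi}\prod_{(i,p)}\ell_{i,p}\left(\nu_{p}(a_{i}),\nu_{p}(c_{i})\right)$ and the resulting tensor decomposition $M=c_{\chi}\bigotimes_{(i,p)}L_{i,p}$. There is a single sum over $k$ in
\[
w_{\chi,a}(c)=\frac{i}{\pi}\sum_{k\geq1}\frac{1}{k}\prod_{i=1}^{m}T_{i}(k),
\]
and even when each $T_{i}$ is multiplicative in $k$ the Euler factor at a prime $p$ is $\sum_{j\geq0}p^{-j}\prod_{i}T_{i}(p^{j})$, which is not $\prod_{i}\sum_{j\geq0}p^{-j}T_{i}(p^{j})$: the coordinates remain coupled at every prime, and the coupling does not disappear after regrouping the $k$-sum. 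You can see this in the paper's own endgame, where the local factor is $\gamma\left(p,n_{d}(p)\right)$ with $n_{d}(p)=\#\{i:p\mid d_{i}\}$ aggregating over all coordinates, and $\gamma(p,k)\neq\gamma(p,1)^{k}$. So $M$ is not a tensor product of single-coordinate blocks, and the final step of your argument has nothing to act on.

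There is a second, related problem: even in Reid's one-coordinate setting the raw matrix $\left(w_{\chi,a}(c)\right)$ is not anti-triangular at the primes $p\nmid f_{\chi_{i}}$. For such primes the individual values $w_{\chi,a}(c)$ with $\nu_{p}(a_{i}c_{i})>\nu_{p}(q_{i})$ need not vanish, since the terms of the $k$-sum with $p\mid k$ can still satisfy $(r_{i},ka_{i}c_{i})\mid q_{i}$; what vanishes is an alternating combination of such values. This is exactly why the paper, following Reid, replaces the $w_{\chi,a}$ by the M\"obius transforms $v_{\chi,a}=\sum_{d\mid a}\mu(d)\overline{\chi}^{*}(d)w_{\chi,a/d}$ --- an invertible triangular change of basis --- proves $v_{\chi,a}(c)=0$ whenever $ac\nmid q_{\chi}$ (Proposition \ref{prop:v_chi_a_c_equals_0}, where the cases $p\mid f_{i}$ and $p\nmid f_{i}$ require genuinely different arguments), orders the divisors of $q_{\chi}$ so that this yields an anti-triangular matrix (Proposition \ref{prop:Ordering}), and shows the antidiagonal entries with $ac=q_{\chi}$ are nonzero by extracting $L(1,\overline{\chi})\neq0$ times an explicit finite Euler product (Proposition \ref{prop:v_chi_a_c_neq_0}). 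Your proposal needs this M\"obius step, or an equivalent substitute, to produce a triangular structure; the tensor-product shortcut does not exist.
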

With this proposition, we can prove Proposition \ref{prop:span_odd_functions_L2R}
and hence Theorem \ref{thm:dimensionMS}.
\begin{proof}[Proof of Proposition \ref{prop:span_odd_functions_L2R}]
For a tuple $f=\left(f_{1},\ldots,f_{m}\right)\in\NN^{m}$, let $\hat{\varphi}_{\mathrm{odd}}\left(f\right)$
denote the number of odd characters $\chi\in\widehat{R^{\times}}$
such that $f=f_{\chi}$. Using Proposition \ref{thm:MReigenspaces}
and the decomposition (\ref{eq:decomp}), we take the union of the
sets $\left\{ w_{\chi,a}:a\mid q_{\chi}\right\} $ over all odd characters
$\chi$ to obtain $\left(\hat{\varphi}_{\mathrm{odd}}*\divisor\right)\left(r\right)$
linearly independent functions in $\Ltwoodd(R)$. We would therefore
like to show that this number is equal to $\dim\left(\Ltwoodd\left(R\right)\right)$. 

Since $*$ is associative, we get
\[
\left(\hat{\varphi}_{\mathrm{odd}}*\divisor\right)\left(r\right)=\left(\hat{\varphi}_{\mathrm{odd}}*1*1\right)\left(r\right)=\sum_{f\mid r}\left(\hat{\varphi}_{\mathrm{odd}}*1\right)\left(f\right).
\]
Now each term $\left(\hat{\varphi}_{\mathrm{odd}}*1\right)\left(f\right)$
in the sum is equal to the total number of odd characters of the group
$G_{f}:=\oplus_{i=1}^{m}\left(\ZZ/f_{i}\right)^{\times}$. This number
is equal to zero if $G_{f}$ is the trivial group, which is the case
if and only if $f_{i}$ equals one or two for every $i=1,2,\ldots,m$.
Otherwise, $\{\psi\in\widehat{G_{f}}:\psi(-1,\ldots,-1)=1\}$ is an
order two subgroup of $\widehat{G_{f}}$ and hence there are $\tfrac{1}{2}\left|\widehat{G_{f}}\right|=\tfrac{1}{2}\left|G_{f}\right|=\tfrac{1}{2}\varphi\left(f\right)$
odd characters in $\widehat{G_{f}}.$

If we let $\delta\left(f\right)=1$ whenever every component of $f$
is either 1 or 2 and zero otherwise, then we obtain
\[
\left(\hat{\varphi}_{\mathrm{odd}}*\divisor\right)\left(r\right)=\frac{1}{2}\sum_{f\mid r}\left(\varphi\left(f\right)-\delta\left(f\right)\right)=\frac{1}{2}\left(r_{1}\cdots r_{m}-2^{s}\right)
\]
where $s$ equals the number of $i\in\{1,2,\ldots,m\}$ such that
$r_{i}$ is even. Hence we obtain that the dimension of $\spann(S_{a}:a\in R)$
is at least $\tfrac{1}{2}\left(\left|R\right|-2^{s}\right)$. 

It remains to show that $\dim\left(\Ltwoodd(R)\right)=\tfrac{1}{2}\left(\left|R\right|-2^{s}\right)$.
Observe that the functions $\left\{ \one_{a}-\one_{-a}:a\in R\right\} $
span $\Ltwoodd\left(R\right)$, where $\one_{a}\in L^{2}(R)$ denotes
the indicator function of the element $a\in R$. Indeed, for any $h\in\Ltwoodd\left(R\right)$
we have
\[
h=\frac{1}{2}\sum_{a\in R}h(a)\left(\one_{a}-\one_{-a}\right).
\]
The dimension of $\spann\left(\one_{a}-\one_{-a}:a\in R\right)$ is
equal to one-half the number of elements $a\in R$ such that $a\neq-a$.
But the elements $a\in R$ for which $a=-a$ are precisely the elements
$\left(\epsilon_{1}r_{1}/2,\ldots,\epsilon_{m}r_{m}/2\right)\in R$
where each $\epsilon_{i}=0\;\text{or}\;1$ but $\epsilon_{i}=0$ for
all $i$ such that $r_{i}$ is odd. That is to say, the number of
elements $a\in R$ such that $a=-a$ is exactly $2^{s}$. We therefore
conclude
\begin{align*}
\dim\Ltwoodd\left(R\right) & =\dim\spann\left(\one_{a}-\one_{-a}:a\in R\right)\\
 & =\tfrac{1}{2}\left(\left|R\right|-2^{s}\right)\\
 & \leq\dim\spann(S_{a}:a\in R)\\
 & \leq\dim\Ltwoodd\left(R\right).
\end{align*}
and hence equality holds throughout. Since $S_{a}\in\Ltwoodd(R)$
for each $a\in R$, we conclude that $\Ltwoodd(R)=\spann(S_{a}:a\in R)$. 
\end{proof}

\subsection{Finding a basis for each eigenspace}

It therefore remains to prove Proposition \ref{thm:MReigenspaces}.
For the rest of the paper, we fix some odd $\chi\in\widehat{R^{\times}}$
and let $q:=\left(q_{1},\ldots,q_{m}\right):=q_{\chi}$ and $f:=\left(f_{1},\ldots,f_{m}\right):=f_{\chi}$. 

We start by finding an alternate representation for $w_{\chi,a}(c)$
given $a,c\in R$. This representation is based on \cite[Theorem 9.9]{montgomery_multiplicative_2006},
which expresses the generalized Bernoulli number $B_{1,\chi}$ in
terms of the Dirichlet $L$-function $L\left(s,\chi\right)$ evaluated
at $s=1$.
\begin{prop}
\label{prop:what_is_w_chi_a_c}Let $a=(a_{1},\ldots,a_{m}),c=(c_{1},\ldots,c_{m})\in R$.
Then
\[
w_{\chi,a}\left(c\right)=\frac{i}{\pi}\sum_{k\geq1}\frac{1}{k}\prod_{i=1}^{m}\left(\overline{\chi}_{i}^{*}\left(\frac{ka_{i}c_{i}}{(r_{i},ka_{i}c_{i})}\right)F_{\chi_{i}}((r_{i},ka_{i}c_{i}))\right)
\]
where $F_{\chi_{i}}(\beta)=0$ if $\beta$ does not divide $q_{i}$,
and otherwise
\begin{equation}
F_{\chi_{i}}(\beta)=\chi_{i}^{*}\left(\frac{q_{i}}{\beta}\right)\mu\left(\frac{q_{i}}{\beta}\right)\frac{\varphi(r_{i})\tau(\chi_{i}^{*})}{\varphi(r_{i}/\beta)}.\label{eq:MV}
\end{equation}
\end{prop}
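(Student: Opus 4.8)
The plan is to expand the Bernoulli function $B_{1}$ in its Fourier series, factor the resulting twisted character sum over the coordinates of $R^{\times}$, and reduce each coordinate factor — a Gauss sum of a (generally imprimitive) Dirichlet character twisted by an additive character — to the primitive case, following the treatment of Gauss sums and the formula for $L(1,\chi)$ in \cite[Section 9.2]{montgomery_multiplicative_2006}. Unwinding the definitions of $w_{\chi,a}$ and $S_{a'b}$ gives, for $a,c\in R$,
\[
w_{\chi,a}(c)=\sum_{b\in R^{\times}}\chi(b)\,B_{1}\!\left(\sum_{i=1}^{m}\frac{a_{i}b_{i}c_{i}}{r_{i}}\right),
\]
and substituting the pointwise Fourier expansion $B_{1}(x)=\frac{i}{2\pi}\sum_{k\neq0}k^{-1}e(kx)$ and factoring the (finite) sum over $b=(b_{1},\ldots,b_{m})\in R^{\times}=\bigoplus_{i=1}^{m}(\ZZ/r_{i})^{\times}$ across coordinates yields
\[
w_{\chi,a}(c)=\frac{i}{2\pi}\sum_{k\neq0}\frac{1}{k}\prod_{i=1}^{m}\tau(\chi_{i},ka_{i}c_{i}),\qquad\tau(\psi,n):=\sum_{b\bmod r}\psi(b)\,e\!\left(\frac{nb}{r}\right).
\]
Since $\tau(\psi,-n)=\psi(-1)\tau(\psi,n)$ and $\chi$ is odd, the $k<0$ terms reproduce the $k>0$ terms, collapsing the sum to $\frac{i}{\pi}\sum_{k\geq1}k^{-1}\prod_{i}\tau(\chi_{i},ka_{i}c_{i})$. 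Thus the proposition reduces to the purely local claim: if $\psi$ is a character mod $r$ with conductor $f$, inducing primitive character $\psi^{*}$ mod $f$, and $q=r/f$, then for every integer $n$,
\[
\tau(\psi,n)=\overline{\psi^{*}}\!\left(\frac{n}{(n,r)}\right)F_{\psi}((n,r)),
\]
where $F_{\psi}(\beta)=0$ unless $\beta\mid q$, in which case $F_{\psi}(\beta)=\psi^{*}(q/\beta)\mu(q/\beta)\frac{\varphi(r)\tau(\psi^{*})}{\varphi(r/\beta)}$; taking $\psi=\chi_{i}$, $n=ka_{i}c_{i}$ then gives the stated formula.

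To prove this local identity I would factor $\psi=\psi^{*}\psi_{0}$ with $\psi_{0}$ the principal character mod $r$, use $\psi_{0}(b)=\sum_{d\mid(b,r)}\mu(d)$ to strip off the coprimality condition, and substitute $b=db'$; only squarefree $d$ with $(d,f)=1$ survive, and these automatically divide $q$, so $f\mid r/d$ and $\psi^{*}$ is well-defined modulo $r/d$. Writing a residue mod $r/d$ as $fs+t$ with $t$ ranging mod $f$ and $s$ mod $q/d$, the sum over $s$ is a geometric sum which vanishes unless $(q/d)\mid n$ and otherwise equals $q/d$, while the remaining sum over $t$ is $\tau(\psi^{*},nd/q)=\overline{\psi^{*}}(nd/q)\tau(\psi^{*})$ by separability of primitive Gauss sums. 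This leaves
\[
\tau(\psi,n)=\tau(\psi^{*})\sum_{d}\mu(d)\psi^{*}(d)\frac{q}{d}\,\overline{\psi^{*}}\!\left(\frac{nd}{q}\right),
\]
the sum over squarefree $d\mid q$ with $(d,f)=1$ and $(q/d)\mid n$.

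This divisor sum is multiplicative in the prime factorization of $q$, so I would evaluate it one Euler factor at a time and compare with $\overline{\psi^{*}}(n/(n,r))F_{\psi}((n,r))$: at a prime where two values of $d$ contribute, their terms combine via $p^{\alpha}-p^{\alpha-1}=\varphi(p^{\alpha})$ with $\alpha=\nu_{p}(q)$, which is precisely what produces the ratio $\varphi(r)/\varphi(r/(n,r))$; the Möbius and $\psi^{*}$ factors at the remaining primes assemble into $\mu(q/(n,r))\psi^{*}(q/(n,r))$; and the degenerate regime $(n,r)\nmid q$ is exactly when the divisor sum is empty, matching $F_{\psi}=0$. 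I expect this last prime-by-prime bookkeeping to be the main obstacle — keeping straight the conductor versus the modulus, the distinct roles of $\psi^{*}$ and $\overline{\psi^{*}}$, and all the vanishing cases. The Gauss sum $\tau(\chi_{i}^{*})$ that surfaces in the closed form is exactly the shape in which \cite[Theorem 9.9]{montgomery_multiplicative_2006} packages the generalized Bernoulli number $B_{1,\psi^{*}}$ together with $L(1,\psi^{*})$, which is why that result underlies the representation.
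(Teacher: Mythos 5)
Your proposal is correct and follows essentially the same route as the paper: expand $B_{1}$ as an exponential sum in $k$, factor the resulting character sum over the coordinates of $R^{\times}$, use the oddness of $\chi$ to collapse to $k\geq1$, and evaluate each local factor as the Gauss sum of a possibly imprimitive Dirichlet character. The only differences are cosmetic: the paper reaches the exponential sum via the imaginary part of $\sum_{b}\chi(b)\log\left(1-e\left(\theta_{acb}\right)\right)$ rather than the Fourier series of $B_{1}$, and it simply cites \cite[Theorem 9.12]{montgomery_multiplicative_2006} for the local Gauss-sum identity that you re-derive (where your claim that the divisor sum is literally empty when $(n,r)\nmid q$ should be softened to ``vanishes,'' since in some cases it is the factor $\overline{\psi^{*}}(nd/q)$ that kills the surviving terms).
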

The factor $\tau\left(\chi_{i}^{*}\right)$ above denotes the Gauss
sum of the primitive character $\chi_{i}^{*}$:
\[
\tau\left(\chi_{i}^{*}\right):=\sum_{t\in\left(\ZZ/f_{i}\right)^{\times}}\chi_{i}^{*}\left(t\right)e\left(t/f_{i}\right).
\]
For our purposes, the only thing we need to know about this quantity
is that it is nonzero \cite[Theorem 9.7]{montgomery_multiplicative_2006}.
\begin{proof}
Consider the quantity
\[
A:=\sum_{\substack{b\in R^{\times}\\
\theta_{acb}\notin\ZZ
}
}\chi(b)\log\left(1-e\left(\theta_{acb}\right)\right)
\]
where $e\left(x\right):=\exp\left(2\pi ix\right)$, the logarithm
is the principal branch, and 
\[
\theta_{acb}:=\frac{a_{1}c_{1}b_{1}}{r_{1}}+\cdots+\frac{a_{m}c_{m}b_{m}}{r_{m}}.
\]
In the sum, we replace $\log\left(1-e\left(\theta_{acb}\right)\right)$
with its real and imaginary parts:
\[
\log\left(1-e\left(\theta_{acb}\right)\right)=\log\left|2\sin\left(\pi\theta_{acb}\right)\right|+i\pi\left(\left\{ \theta_{acb}\right\} -1/2\right),
\]
then distribute to obtain two sums. The first of these is zero which
can be seen by noting that $\left|\sin\left(\pi\theta_{acb}\right)\right|=\left|\sin\left(\pi\theta_{-acb}\right)\right|$
and therefore we can replace each $\chi(b)$ with $\tfrac{1}{2}(\chi(b)+\chi(-b))$
which is zero since $\chi$ is odd. The second sum is therefore equal
to $A$, and from it we recover $w_{\chi,a}(c)$: 
\[
A=i\pi\sum_{\substack{b\in R^{\times}\\
\theta_{acb}\notin\ZZ
}
}\chi(b)(\{\theta_{acb}\}-1/2)=i\pi w_{\chi,a}(c).
\]
On the other hand, we use the Taylor expansion of the logarithm to
obtain
\begin{align*}
A & =\sum_{\substack{b\in R^{\times}\\
\theta_{acb}\notin\ZZ
}
}\chi(b)\sum_{k\geq1}-\frac{e(k\theta_{acb})}{k}=-\sum_{k\geq1}\frac{1}{k}\sum_{b\in R^{\times}}\chi(b)e(k\theta_{acb}).
\end{align*}
Since the double sum on the left is a finite sum of convergent series,
we may interchange the sums. The second equality holds since, after
interchanging, the terms of the inner sum for which $\theta_{acb}\in\ZZ$
sum to zero. Indeed, over such terms we may pull out $e\left(k\theta_{acb}\right)=1$
and replace each $\chi(b)$ with $\tfrac{1}{2}(\chi(b)+\chi(-b))$
which is zero as before. We may therefore write the inner sum as the
product
\[
\prod_{i=1}^{m}\left(\sum_{b_{i}\in\left(\ZZ/r_{i}\right)^{\times}}\chi_{i}\left(b_{i}\right)e\left(\frac{ka_{i}c_{i}b_{i}}{r_{i}}\right)\right).
\]
Now let $\beta_{i,k}:=(r_{i},ka_{i}c_{i})$. Applying \cite[Theorem 9.12]{montgomery_multiplicative_2006},
each factor above can be written
\[
\overline{\chi_{i}}^{*}\left(\frac{ka_{i}c_{i}}{\beta_{i,k}}\right)\chi_{i}^{*}\left(\frac{q_{i}}{\beta_{i,k}}\right)\mu\left(\frac{q_{i}}{\beta_{i,k}}\right)\frac{\varphi\left(r_{i}\right)}{\varphi\left(r_{i}/\beta_{i,k}\right)}\gauss\left(\chi_{i}^{*}\right)
\]
if $\beta_{i,k}\mid q_{i}$. Otherwise it is zero.
\end{proof}
Following Reid in \cite[Theorem 5.16]{reid_young_1985}, it is more
convenient to prove Proposition \ref{thm:MReigenspaces} by showing
that the functions
\[
v_{\chi,a}:=\sum_{d\mid a}\mu\left(d\right)\overline{\chi}^{*}\left(d\right)w_{\chi,a/d}
\]
over all $a\in\NN^{m}$ which divide $q$ are linearly independent
in $L^{2}(R)$. We can accomplish this by showing that the matrix
\[
\left(v_{\chi,a}(c)\right)_{a,c}
\]
is nonsingular, where the rows and columns of the matrix are indexed
by tuples $a,c\in\NN^{m}$ such that $a\mid q$ and $c\mid q$, and
$v_{\chi,a}(c):=v_{\chi,a}(c')$ where $c'$ is the image of $c$
under the canonical map $\ZZ^{m}\rightarrow R$. This is done over
the next three propositions. Proposition \ref{prop:Ordering} finds
an ordering of the divisors of $q$ so that: 
\begin{enumerate}
\item the indices $(a,c)$ of the antidiagonal entries of the matrix satisfy
$ac=q$.
\item The indices $(a,c)$ to the right of the antidiagonal entries satisfy
$ac\nmid q$.
\end{enumerate}
Proposition \ref{prop:v_chi_a_c_equals_0} shows that $v_{\chi,a}(c)=0$
for all $a\mid q$ and $c\mid q$ satisfying $ac\nmid q$. Finally,
this paper concludes with Proposition \ref{prop:v_chi_a_c_neq_0},
which shows that $v_{\chi,a}(c)\neq0$ for all $a,c\in\NN^{m}$ satisfying
$ac=q$ and hence the matrix is indeed nonsingular.
\begin{prop}
\label{prop:Ordering}There exists a linear ordering 
\[
a^{(1)}<a^{(2)}<\cdots<a^{(N)}
\]
of tuples in $\NN^{m}$ which divide $q$, so that:
\end{prop}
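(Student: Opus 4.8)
The plan is to produce the order by sorting the tuples dividing $q$ according to the weight $\|a\| := a_1 a_2 \cdots a_m$ and then breaking ties by hand so that the complementation map $a \mapsto q/a$ reverses the order; the two required properties then drop out of two elementary divisibility observations. Throughout, write $D := \{a \in \NN^m : a \mid q\}$ and, for $a \in D$, set $\bar a := q/a$, which again lies in $D$.

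First I would record two facts. (i) For every $a \in D$ we have $\|a\|\,\|\bar a\| = q_1 \cdots q_m =: P$; hence $a \mapsto \bar a$ is an involution of $D$, $\|a\| < \|b\|$ holds if and only if $\|\bar b\| < \|\bar a\|$, and the only tuple $a$ with $a = \bar a$ is $(\sqrt{q_1}, \ldots, \sqrt{q_m})$, which belongs to $D$ exactly when every $q_k$ is a perfect square. (ii) If $a, b \in D$ and $ab \mid q$ --- equivalently $b \mid \bar a$ --- then $b_k \mid \bar a_k$ for every $k$, so $\|b\| \le \|\bar a\|$, with equality only if $b = \bar a$.

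Next I would build the order. Let $v_1 < \cdots < v_t$ be the distinct values attained by $\|\cdot\|$ on $D$, and put $L_v := \{a \in D : \|a\| = v\}$. By (i), $a \mapsto \bar a$ carries $L_{v_s}$ bijectively onto $L_{v_{t+1-s}}$, so $|L_{v_s}| = |L_{v_{t+1-s}}|$; for $s \ne t+1-s$ it moves $L_{v_s}$ off itself, and on the middle block $L_{v_{(t+1)/2}}$ (present only when $t$ is odd) it restricts to an involution having at most one fixed point, again by (i). Order the blocks as $L_{v_1}, L_{v_2}, \ldots, L_{v_t}$. Within a complementary pair $L_{v_s}, L_{v_{t+1-s}}$ with $s < t+1-s$, order $L_{v_s}$ arbitrarily, say $x_1 \prec \cdots \prec x_k$, and order $L_{v_{t+1-s}}$ as $\bar x_k \prec \cdots \prec \bar x_1$. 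On the middle block, list representatives $z_1, \ldots, z_a$ of its two-element orbits in any order, then the fixed point (if any), then $\bar z_a, \ldots, \bar z_1$; this order is reversed by the involution. Let $a^{(1)} < \cdots < a^{(N)}$ denote the resulting total order. A direct count with the block sizes, using $|L_{v_s}| = |L_{v_{t+1-s}}|$, shows that for every $a \in D$ the position of $\bar a$ equals $N+1$ minus the position of $a$; since this says exactly $a^{(N+1-i)} = q/a^{(i)}$, we obtain $a^{(i)} a^{(N+1-i)} = q$ for every $i$, which is the first property.

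Finally, for the second property: if $a^{(i)} a^{(j)} \mid q$, then $a^{(j)} \mid \overline{a^{(i)}} = a^{(N+1-i)}$, so by (ii) either $\|a^{(j)}\| < \|a^{(N+1-i)}\|$ --- which forces $j < N+1-i$, as the order refines $\|\cdot\|$ --- or $a^{(j)} = a^{(N+1-i)}$ and $j = N+1-i$. Either way $i + j \le N+1$, so no pair $(a^{(i)}, a^{(j)})$ strictly to the right of the antidiagonal satisfies $a^{(i)} a^{(j)} \mid q$. The mathematical content sits in (i), (ii), and the position count of the previous paragraph; that count is the one step I expect to demand care, but it is routine bookkeeping with the block sizes and brings in no new idea.
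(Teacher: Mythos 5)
Your construction is correct and is essentially the paper's: both sort the divisors of $q$ by a grading that the complementation $a\mapsto q/a$ reverses (the paper uses $\mathrm{rank}(a)=\sum_{i}\sum_{p}\nu_{p}(a_{i})$, which is additive under complementation, while you use the product $a_{1}\cdots a_{m}$, which is multiplicative --- an immaterial difference), then order the lower levels arbitrarily, mirror that order in the upper levels, and handle the middle level by pairing the two-element orbits around the possible fixed point. The only remark worth adding is that you verify condition (1) in its equivalent ``right of the antidiagonal'' form $a^{(i)}a^{(j)}\nmid q$ for $i+j>N+1$; the literal form ($i<j$ implies $a^{(j)}\nmid a^{(i)}$) is in any case immediate from the fact that your order refines the weight $a_{1}\cdots a_{m}$.
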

\begin{enumerate}
\item For all $i,j=1,2,\ldots,N$, $i<j$ implies $a^{(j)}\nmid a^{(i)}$. 
\item For all $i=1,2,\ldots,N$, $a^{(i)}a^{(N-i+1)}=q$.
\end{enumerate}
\begin{proof}
The tuples in $\NN^{m}$ which divide $q$ form a graded poset with
rank function given by $\mathrm{rank}\left(a_{1},\ldots,a_{m}\right)=\sum_{i=1}^{m}\sum_{p}\nu_{p}\left(a_{i}\right)$
where the inner sum is over all primes $p$. To construct our ordering,
we first specify that $a<b$ whenever $\mathrm{rank}\left(a\right)<\mathrm{rank}\left(b\right)$.
Then, we arbitrarily order the elements within each level set $\mathrm{rank}^{-1}\left(j\right)$
for each $j$ in the range $0\leq j<\mathrm{rank}\left(q\right)/2$.
If $\mathrm{rank}\left(q\right)$ is even, we further take the elements
$a$ with rank equal to $\mathrm{rank}\left(q\right)/2$ which do
not satisfy $a^{2}=q$, group them into pairs of the form $\left(a,q/a\right)$,
choose a unique representative from each such pair, and arbitrarily
order these representatives. Next, we set $q/a>q/b$ whenever $a<b$
and $\mathrm{rank}\left(a\right)=\mathrm{rank}\left(b\right)\leq\mathrm{rank}\left(q\right)/2$.
Finally, we set $a^{((N+1)/2)}=a$ if there exists $a$ which satisfies
$a^{2}=q$. The result is a linear ordering satisfying (1) and (2).
\end{proof}
\begin{prop}[{c.f. \cite[Proposition 5.17(i)]{reid_young_1985}, \cite[Lemma 4.18]{fletcher_inverting_1989}}]
\label{prop:v_chi_a_c_equals_0}If $a,c\in\NN^{m}$ divide $q$ but
$ac\nmid q$, then $v_{\chi,a}(c)=0$.
\end{prop}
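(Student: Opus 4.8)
The plan is to substitute the formula of Proposition~\ref{prop:what_is_w_chi_a_c} for $w_{\chi,a/d}(c)$ into the definition $v_{\chi,a}(c)=\sum_{d\mid a}\mu(d)\overline{\chi}^{*}(d)\,w_{\chi,a/d}(c)$, interchange the finite sum over $d\mid a$ with the sum over $k\geq 1$, and exploit that for each fixed $k$ the resulting sum over $d$ factors as a product over the $m$ coordinates. This yields $v_{\chi,a}(c)=\tfrac{i}{\pi}\sum_{k\geq 1}\tfrac1k\prod_{i=1}^{m}\sigma_i(k)$, where $\sigma_i(k):=\sum_{d_i\mid a_i}\mu(d_i)\overline{\chi}_i^{*}(d_i)\,\overline{\chi}_i^{*}(n_i/\beta_i)\,F_{\chi_i}(\beta_i)$ with $n_i=k(a_i/d_i)c_i$ and $\beta_i=(r_i,n_i)$. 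Since $ac\nmid q$, I would fix a coordinate $i_0$ with $a_{i_0}c_{i_0}\nmid q_{i_0}$; it then suffices to prove $\sigma_{i_0}(k)=0$ for every $k\geq 1$, after which only the single modulus $r_{i_0}$ is involved.

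So, abbreviating $a=a_{i_0}$, $c=c_{i_0}$, $r=r_{i_0}$, $q=q_{i_0}$, $\chi=\chi_{i_0}$, and letting $f$ be the conductor of $\chi$ (so $r=qf$), I write $e=a/d$ and $\beta_e=(r,kec)$, so that $\sigma_{i_0}(k)=\sum_{e\mid a}\mu(a/e)\,\overline{\chi}^{*}(a/e)\,\overline{\chi}^{*}(kec/\beta_e)\,F_{\chi}(\beta_e)$, and I must show this vanishes given $a\mid q$, $c\mid q$, $ac\nmid q$. Terms with $\mu(a/e)=0$ drop out, so I restrict to $e$ with $a/e$ squarefree. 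Because $c\mid q$ but $ac\nmid q$, there is a prime $p_0$ with $\nu_{p_0}(a)+\nu_{p_0}(c)>\nu_{p_0}(q)$, and in particular $p_0\mid a$. For a surviving $e$ one has $\nu_{p_0}(e)\in\{\nu_{p_0}(a)-1,\nu_{p_0}(a)\}$, so the surviving indices fall into pairs $\{e^{(0)},e^{(1)}\}$ agreeing away from $p_0$ with $\nu_{p_0}(e^{(0)})=\nu_{p_0}(a)$ and $e^{(1)}=e^{(0)}/p_0$; this is a fixed-point-free involution and $\mu(a/e^{(1)})=-\mu(a/e^{(0)})$. Everything then comes down to showing that the two terms in each pair are \emph{equal}, so that the pair cancels.

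To do that I would track $p_0$-adic valuations. From $\nu_{p_0}(e^{(0)})=\nu_{p_0}(a)$ one gets $\nu_{p_0}(ke^{(0)}c)\geq \nu_{p_0}(q)+1$. If $p_0\mid f$, then $\nu_{p_0}(r)>\nu_{p_0}(q)$, so $\nu_{p_0}(\beta_{e^{(0)}})=\min\{\nu_{p_0}(r),\nu_{p_0}(ke^{(0)}c)\}\geq\nu_{p_0}(q)+1$, whence $\beta_{e^{(0)}}\nmid q$ and $F_{\chi}(\beta_{e^{(0)}})=0$; meanwhile $p_0\mid a/e^{(1)}$ and $p_0\mid f$ force $\overline{\chi}^{*}(a/e^{(1)})=0$, so both terms in the pair are $0$. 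If instead $p_0\nmid f$, then $\nu_{p_0}(r)=\nu_{p_0}(q)$ and $\nu_{p_0}(ke^{(j)}c)\geq\nu_{p_0}(r)$ for $j=0,1$, so $\beta_{e^{(0)}}$ and $\beta_{e^{(1)}}$ share the $p_0$-valuation $\nu_{p_0}(r)$ and agree elsewhere; hence $\beta_{e^{(0)}}=\beta_{e^{(1)}}$ and the $F_{\chi}$-factors coincide. Moreover, going from $e^{(0)}$ to $e^{(1)}$ multiplies $a/e$ by $p_0$ and divides the integer $kec/\beta_e$ by $p_0$, so by complete multiplicativity of $\overline{\chi}^{*}$ (and $\overline{\chi}^{*}(p_0)\neq 0$) the product $\overline{\chi}^{*}(a/e)\,\overline{\chi}^{*}(kec/\beta_e)$ is unchanged; hence the two terms are equal. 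In either case the pair contributes nothing, so $\sigma_{i_0}(k)=0$ and thus $v_{\chi,a}(c)=0$.

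The hardest part will be keeping the $p_0$-adic valuation bookkeeping honest — the valuations of $\beta_e=(r,kec)$ and of the arguments of $\overline{\chi}^{*}$ and $F_{\chi}$ — together with the clean case split on whether $p_0$ divides the conductor $f$: when $p_0\nmid f$ the vanishing is a genuine sign-cancellation between paired terms, whereas when $p_0\mid f$ each term of a pair is already zero, for two different reasons. The remaining ingredients — justifying the interchange of the finite $d$-sum with the $k$-sum, the factorization over coordinates, and checking that the pairing is a fixed-point-free involution on the surviving indices — should be routine.
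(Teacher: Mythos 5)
Your proposal is correct and follows essentially the same route as the paper: both rest on Proposition \ref{prop:what_is_w_chi_a_c} together with the case split on whether the offending prime divides the conductor, using $\overline{\chi}^{*}$-vanishing versus $F_{\chi}$-vanishing in one case and M\"obius sign cancellation via $(r_{i},ka_{i}c_{i}/d_{i})=(r_{i},ka_{i}c_{i}/p_{i}d_{i})$ in the other. Your per-coordinate factorization of the $k$-sum and the uniform use of the pairing in both cases are only organizational variants of the paper's argument, which works directly with the full terms $w_{\chi,a/d}(c)$.
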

\begin{proof}
Assume $a\mid q$ and $c\mid q$ but $ac\nmid q$. Then there exists
some $i\in\left\{ 1,2,\ldots,m\right\} $, $\alpha\geq0$, and prime
$p_{i}$ such that $p_{i}^{\alpha+1}\mid a_{i}c_{i}$ and $p_{i}^{\alpha}\mid q_{i}$
but $p_{i}^{\alpha+1}\nmid q_{i}$. The key insight (taken from the
above two references) is that are two different possible reasons why
$v_{\chi,a}(c)$ must equal zero, depending on whether or not $p_{i}\mid f_{i}$. 

First suppose $p_{i}\mid f_{i}$. Let $d\mid a$ and assume that $d_{i}$
is coprime to $f_{i}$. Then $p_{i}$ does not divide $d_{i}$ and
therefore $p_{i}^{\alpha+1}$ divides $a_{i}c_{i}/d_{i}$. We also
have $p_{i}^{\alpha}\mid q_{i}$ and $p_{i}\mid f_{i}$ which means
$p_{i}^{\alpha+1}\mid r_{i}$. It follows that $p_{i}^{\alpha+1}$
divides $\left(r_{i},a_{i}c_{i}/d_{i}\right)$ and hence $p_{i}^{\alpha+1}\mid\left(r_{i},ka_{i}c_{i}/d_{i}\right)$
for every $k\geq1$. Since $p_{i}^{\alpha+1}$ does not divide $q_{i}$,
it follows that $\left(r_{i},ka_{i}c_{i}/d_{i}\right)$ does not divide
$q_{i}$ for any $k\geq1$. By Proposition \ref{prop:what_is_w_chi_a_c},
then, we conclude $w_{\chi,a/d}(c)=0$ for every $d\mid a$ such that
$d_{i}$ is coprime to $f_{i}$. But the only terms in the sum
\[
v_{\chi,a}(c)=\sum_{d\mid a}\mu\left(d\right)\overline{\chi}^{*}\left(d\right)w_{\chi,a/d}(c)
\]
which can be nonzero are the ones for which $d$ is coprime to $f$
in \emph{every} component, including component $i$. This is due to
the presence of the $\overline{\chi}^{*}\left(d\right)$ term which
vanishes if this is not the case. It follows that $v_{\chi,a}(c)=0$
in the case $p_{i}\mid f_{i}$.

Now suppose $p_{i}\nmid f_{i}$. Since $p_{i}^{\alpha+1}\mid a_{i}c_{i}$,
it follows that $p_{i}$ must divide both $a_{i}$ and $c_{i}$ since
both $a_{i}$ and $c_{i}$ are divisors of $q_{i}$ and $p_{i}^{\alpha+1}\nmid q_{i}$.
In particular, $p_{i}$ must divide $a_{i}$. Now let $p:=(1,\ldots,1,p_{i},1,\ldots,1)$
and let $p'=p^{\nu_{p_{i}}(a_{i})}$ so that the $i$\textsuperscript{th}
component of $a/p'$ is not divisible by $p_{i}$. Because the presence
of the $\mu(d)$ term ensures that the sum $v_{\chi,a}(c)$ is only
over $d$ with squarefree components, we can group the sum as follows:
\begin{equation}
\sum_{d\mid\frac{a}{p'}}\left(\mu\left(d\right)\overline{\chi}^{*}\left(d\right)w_{\chi,a/d}(c)+\mu\left(pd\right)\overline{\chi}^{*}\left(pd\right)w_{\chi,a/pd}(c)\right).\label{eq:group_into_pairs_cancel_out}
\end{equation}
Since $\mu(pd)=-\mu(d)$ for every $d\mid\tfrac{a}{p'}$, it suffices
to show
\[
\overline{\chi}^{*}\left(d\right)w_{\chi,a/d}(c)=\overline{\chi}^{*}\left(pd\right)w_{\chi,a/pd}(c)
\]
for every $d\mid\tfrac{a}{p'}$ in order to establish $v_{\chi,a}(c)=0$.
By Proposition \ref{prop:what_is_w_chi_a_c}, it suffices to show
that
\begin{align*}
 & \overline{\chi}_{i}^{*}(d_{i})\overline{\chi}_{i}^{*}\left(\frac{ka_{i}c_{i}/d_{i}}{(r_{i},ka_{i}c_{i}/d_{i})}\right)F_{\chi_{i}}((r_{i},ka_{i}c_{i}/d_{i}))\\
 & =\overline{\chi}_{i}^{*}(p_{i}d_{i})\overline{\chi}_{i}^{*}\left(\frac{ka_{i}c_{i}/p_{i}d_{i}}{(r_{i},ka_{i}c_{i}/p_{i}d_{i})}\right)F_{\chi_{i}}((r_{i},ka_{i}c_{i}/p_{i}d_{i}))
\end{align*}
for every $d\mid\tfrac{a}{p'}$ and every $k\geq1$. But since $p_{i}\nmid f_{i}$
and $p_{i}\nmid d_{i}$, we have $p_{i}^{\alpha+1}\nmid r_{i}$ while
$p_{i}^{\alpha+1}\mid ka_{i}c_{i}/d_{i}$. It follows that $\left(r_{i},ka_{i}c_{i}/d_{i}\right)=\left(r_{i},ka_{i}c_{i}/p_{i}d_{i}\right)$,
and hence the above equality indeed holds for all $d\mid\tfrac{a}{p'}$
and all $k\geq1$. 
\end{proof}
\begin{prop}[{c.f. \cite[Proposition 5.17(ii)]{reid_young_1985}}]
\label{prop:v_chi_a_c_neq_0}Let $a,c\in\NN^{m}$ be divisors of
$q$ such that $ac=q$. Then $v_{\chi,a}(c)\neq0$.
\end{prop}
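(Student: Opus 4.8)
The plan is to evaluate $v_{\chi,a}(c)$ in closed form and to recognise it as a manifestly nonzero constant times the value at $s=1$ of a nonprincipal Dirichlet $L$-function, so that Dirichlet's nonvanishing theorem finishes the argument.

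First I would pare the defining sum $v_{\chi,a}(c)=\sum_{d\mid a}\mu(d)\overline{\chi}^{*}(d)\,w_{\chi,a/d}(c)$ down to its \emph{admissible} terms: those $d\mid a$ whose components are squarefree (otherwise $\mu(d)=0$) and each coprime to the corresponding conductor $f_i$ (otherwise $\overline{\chi}^{*}(d)=\prod_i\overline{\chi}_{i}^{*}(d_i)=0$). For such a $d$ I would feed Proposition~\ref{prop:what_is_w_chi_a_c} the tuple $a/d$. Because $ac=q$ we have $(a/d)_ic_i=q_i/d_i$, and since $q_i/d_i$ divides $q_i\mid r_i$ with $r_i/(q_i/d_i)=d_if_i$, the gcd appearing in Proposition~\ref{prop:what_is_w_chi_a_c} simplifies to $(r_i,\,k(a/d)_ic_i)=(q_i/d_i)\,(d_if_i,k)$. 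Using that $d_i$ is squarefree and coprime to $f_i$, this divides $q_i$ exactly when $(f_i,k)=1$, in which case it equals $(q_i/d_i)\,(d_i,k)$. Hence, for a fixed admissible $d$, only the $k$ coprime to every $f_i$ contribute to $w_{\chi,a/d}(c)$ (the others kill some factor of the product in Proposition~\ref{prop:what_is_w_chi_a_c}), and for such $k$ the $i$\textsuperscript{th} factor becomes $\overline{\chi}_{i}^{*}\!\bigl(k/(d_i,k)\bigr)\,\chi_{i}^{*}\!\bigl(d_i/(d_i,k)\bigr)$ times a quotient of values of $\mu$, $\varphi$ and $\tau(\chi_{i}^{*})$.

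Next I would interchange the finite sum over $d$ with the (convergent) sum over $k$, noting that the inner sum over $d$ then factors as a product over $i$. In the $i$\textsuperscript{th} factor, writing $g=(d_i,k)$ and $e=d_i/g$, complete multiplicativity of $\chi_{i}^{*}$ telescopes $\overline{\chi}_{i}^{*}(g)\,\overline{\chi}_{i}^{*}(k/g)=\overline{\chi}_{i}^{*}(k)$, the $e$-part collapses since $\mu(e)^{2}=|\chi_{i}^{*}(e)|^{2}=1$, and there remains a Möbius sum $\sum_{g}\mu(g)$ over squarefree $g\mid(k,\mathrm{rad}(a_i))$, equal to $[\,(a_i,k)=1\,]$. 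So the sum over $d$ vanishes unless $(k,a_i)=1$ for every $i$, and when it does not vanish the $i$\textsuperscript{th} factor equals $\overline{\chi}_{i}^{*}(k)$ times the $k$-independent positive rational $\rho_i:=\prod_{p\mid a_i,\,p\nmid f_i}\frac{p}{p-1}$ times $\varphi(r_i)\tau(\chi_{i}^{*})/\varphi(f_i)$. Assembling the pieces,
\[
v_{\chi,a}(c)=\frac{i}{\pi}\Bigl(\prod_{i=1}^{m}\rho_i\,\frac{\varphi(r_i)\,\tau(\chi_{i}^{*})}{\varphi(f_i)}\Bigr)\sum_{\substack{k\ge1\\ (k,\,a_if_i)=1,\ 1\le i\le m}}\frac{1}{k}\prod_{i=1}^{m}\overline{\chi}_{i}^{*}(k),
\]
and the prefactor is nonzero because each $\rho_i>0$, each $\varphi(\cdot)>0$, and each $\tau(\chi_{i}^{*})\ne0$ by \cite[Theorem~9.7]{montgomery_multiplicative_2006}.

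It remains to see that the displayed sum is nonzero. Put $\psi:=\prod_{i=1}^{m}\chi_{i}^{*}$, which is a Dirichlet character modulo $\mathrm{lcm}(f_1,\dots,f_m)$ (it is periodic, completely multiplicative, and vanishes exactly on integers sharing a factor with some $f_i$), and let $\psi_0$ be the primitive character inducing $\psi$. Since $\prod_i\overline{\chi}_{i}^{*}(k)=\overline{\psi}(k)$ vanishes whenever $(k,f_i)>1$ for some $i$, the sum above equals $\sum_{(k,N)=1}\overline{\psi_0}(k)/k$, where $N:=\mathrm{lcm}(a_1f_1,\dots,a_mf_m)$ is a multiple of the conductor of $\psi_0$. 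Writing $[\,(k,N)=1\,]=\sum_{d\mid(k,\mathrm{rad}(N))}\mu(d)$ and interchanging with the convergent series $L(1,\overline{\psi_0})=\sum_{\ell\ge1}\overline{\psi_0}(\ell)/\ell$ gives
\[
\sum_{(k,N)=1}\frac{\overline{\psi_0}(k)}{k}=L(1,\overline{\psi_0})\prod_{p\mid N}\Bigl(1-\frac{\overline{\psi_0}(p)}{p}\Bigr),
\]
whose Euler factors are each nonzero because $|\overline{\psi_0}(p)|\le1<p$. Moreover $\psi$, hence $\psi_0$, is nonprincipal: since $\chi$ is odd, $\psi(-1)=\prod_i\chi_{i}^{*}(-1)=\prod_i\chi_i(-1)=\chi(-1,\dots,-1)=-1$. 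By Dirichlet's theorem $L(1,\overline{\psi_0})\ne0$, and therefore $v_{\chi,a}(c)\ne0$. The delicate part of all this is the bookkeeping in the two middle steps: tracking $(d_i,k)$ through Proposition~\ref{prop:what_is_w_chi_a_c}, checking that the sum over $d$ really factors over $i$, and spotting the telescoping-plus-Möbius collapse that forces $(k,a_i)=1$; this is the multivariate version of Reid's argument in \cite[Proposition~5.17(ii)]{reid_young_1985}, and the only nonelementary ingredient, exactly as there, is Dirichlet's nonvanishing theorem, now applied to the possibly imprimitive character $\overline{\psi}$.
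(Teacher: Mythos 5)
Your proof is correct, and it arrives at the same endpoint as the paper's --- $v_{\chi,a}(c)$ expressed as an explicit nonzero constant times an $L$-value at $s=1$ times finitely many nonvanishing Euler factors, with Dirichlet's nonvanishing theorem as the only nonelementary input --- but it performs the two summations in the opposite order, which changes the intermediate combinatorics substantially. The paper fixes $d$ first: it extracts $L(1,\overline{\chi})$ from $\sum_{k}\overline{\chi}(k)g_{d}(k)/k$ via the convolution identity $\overline{\chi}g_{d}=\overline{\chi}*h_{d}$ with $h_{d}=\overline{\chi}\left(\mu*g_{d}\right)$, evaluates $\sum_{k}h_{d}(k)/k$ as a finite Euler product, and only then sums over $d$, reorganizing by the prime-multiplicity vector $n_{d}(\cdot)$ and invoking the binomial theorem to exhibit each local factor of $v_{\chi,a}(c)$ as $\left(1-\overline{\chi}(p)/p\right)\left(p/(p-1)\right)^{n_{a'}(p)}\neq0$. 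You sum over $d$ first; the step that makes this work --- and which you rightly flag as the delicate one --- is that after writing $d_{i}=g\cdot e$ with $g=(d_{i},k)$, the $d_{i}$-sum factors as $\bigl(\sum_{g\mid(a_{i}',k)}\mu(g)\bigr)\bigl(\sum_{e}1/\varphi(e)\bigr)$, whose first factor is the indicator of $(a_{i},k)=1$ and whose second is your $\rho_{i}$; the factorization is legitimate because $g\mid k$ and $(e,k)=1$ force $(g,e)=1$, so the two ranges are independent over the squarefree $a_{i}'$. Your order buys a cleaner closed form (a single restricted $L$-series, no convolution identity, no binomial reorganization), while the paper's order keeps every intermediate object a genuine Dirichlet series and stays closer to Reid's original bookkeeping. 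A minor difference in the endgame: the paper applies $L(1,\cdot)\neq0$ directly to the imprimitive product character $\prod_{i}\overline{\chi_{i}}^{*}$, whereas you pass to the inducing primitive character $\psi_{0}$ and strip the Euler factors over $p\mid N$ by hand; both are valid, and your verification that $\psi_{0}$ is nonprincipal via $\psi(-1)=\chi(-1,\ldots,-1)=-1$ is the same oddness observation the paper relies on.
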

\begin{proof}
Suppose $d\mid a$ and each component $d_{i}$ of $d$ is squarefree
and coprime to $f_{i}$. From Proposition \ref{prop:what_is_w_chi_a_c}
we can write
\[
\mu(d)\overline{\chi}^{*}(d)w_{\chi,a/d}(c)=\frac{i}{\pi}\sum_{k\geq1}\frac{1}{k}\prod_{i=1}^{m}\left(\mu(d_{i})\overline{\chi}_{i}^{*}\left(\frac{kq_{i}}{\beta_{i,k}}\right)F_{\chi_{i}}(\beta_{i,k})\right)
\]
where $\beta_{i,k}:=(r_{i},kq_{i}/d_{i})=(r_{i},k(a_{i}/d_{i})c_{i})$.
Now consider the factor 
\begin{equation}
\mu(d_{i})\overline{\chi}_{i}^{*}\left(\frac{kq_{i}}{\beta_{i,k}}\right)F_{\chi_{i}}(\beta_{i,k}).\label{eq:the_factor}
\end{equation}
which appears in the above expression. We start by showing that, regardless
of whether or not $\beta_{i,k}$ divides $q_{i}$, expression (\ref{eq:the_factor})
simplifies to
\begin{equation}
\gauss\left(\chi_{i}^{*}\right)\cdot\frac{\varphi\left(r_{i}\right)}{\varphi\left(d_{i}f_{i}\right)}\cdot\mu\left(\left(d_{i},k\right)\right)\varphi\left(\left(d_{i},k\right)\right)\overline{\chi_{i}}^{*}\left(k\right).\label{eq:factor_simplifies_to}
\end{equation}
Observe that $\beta_{i,k}$ divides $q_{i}$ if and only if the last
equality of
\begin{equation}
\frac{q_{i}}{\beta_{i,k}}=\frac{q_{i}}{(r_{i},kq_{i}/d_{i})}=\frac{d_{i}}{(d_{i}f_{i},k)}=\frac{d_{i}}{(d_{i},k)}\label{eq:di_di_k}
\end{equation}
holds, as $d_{i}$ is coprime to $f_{i}$ by assumption. Therefore,
if $\beta_{i,k}\mid q_{i}$, then plugging in $d_{i}/(d_{i},k)$ for
$q_{i}/\beta_{k,i}$ in (\ref{eq:the_factor}) quickly yields (\ref{eq:factor_simplifies_to}).
On the other hand, if $\beta_{i,k}\nmid q_{i}$, then (\ref{eq:the_factor})
also simplifies to (\ref{eq:factor_simplifies_to}). Indeed, in this
case (\ref{eq:the_factor}) just equals zero since $F_{\chi_{i}}(\beta_{k,i})$
is zero by definition. Since $\beta_{i,k}\nmid q_{i}$, the last equation
in (\ref{eq:di_di_k}) fails to hold. This implies $k$ shares a factor
with $f_{i}$, and hence $\overline{\chi}_{i}^{*}(k)=0$. So (\ref{eq:factor_simplifies_to})
is zero as well.

We therefore can write
\[
\mu\left(d\right)\overline{\chi}^{*}\left(d\right)w_{\chi,a/d}(c)=\frac{C'_{a,\chi}}{\varphi\left(d\right)}\sum_{k\geq1}\frac{\overline{\chi}\left(k\right)g_{d}\left(k\right)}{k}
\]
where:
\begin{itemize}
\item $C'_{a,\chi}$ is a nonzero constant that depends only on $a$ and
$\chi$
\item $\overline{\chi}:\ZZ\rightarrow\CC$ is an odd Dirichlet character
defined by 
\[
\overline{\chi}\left(k\right):=\prod_{i=1}^{m}\overline{\chi_{i}}^{*}\left(k\right)
\]
(Note: we do not put a star since this Dirichlet character may not
be primitive).
\item $g_{d}:\ZZ\rightarrow\ZZ$ is the function given by
\[
g_{d}\left(k\right)=\prod_{i=1}^{m}\mu\left(\left(d_{i},k\right)\right)\varphi\left(\left(d_{i},k\right)\right).
\]
\end{itemize}
We now further simplify the right hand side above. Let $h_{d}:\ZZ\rightarrow\CC$
be the function 
\[
h_{d}\left(k\right)=\overline{\chi}\left(k\right)\left(\mu*g_{d}\right)\left(k\right)=\overline{\chi}\left(k\right)\sum_{\ell\mid k}\mu\left(\ell\right)g_{d}\left(k/\ell\right),
\]
where $*$ denotes Dirichlet convolution. For $k\geq1$, $h_{d}\left(k\right)$
is zero unless $k$ is square-free. Indeed, if $p$ is a prime such
that $p^{\alpha}$ is the highest power of $p$ dividing $k$ and
$\alpha\geq2$, then
\[
h_{d}\left(k\right)=\overline{\chi}\left(k\right)\sum_{\ell\mid\frac{k}{p^{\alpha}}}\left(\mu\left(\ell\right)g_{d}\left(\frac{k}{\ell}\right)+\mu\left(p\ell\right)g_{d}\left(\frac{k}{p\ell}\right)\right),
\]
and since $g_{d}\left(\ell\right)$ depends only on the square-free
part of $\ell$, the terms in each summand cancel each other out as
in (\ref{eq:group_into_pairs_cancel_out}). Thus we may write
\[
h_{d}\left(k\right)=\overline{\chi}\left(k\right)\sum_{\ell\mid k}\mu\left(k/\ell\right)g_{d}\left(\ell\right)=\overline{\chi}\left(k\right)\mu\left(k\right)\sum_{\ell\mid k}\mu\left(\ell\right)g_{d}\left(\ell\right).
\]
If $n_{d}(p)$ denotes the number of indices $i\in\left\{ 1,2,\ldots,m\right\} $
such that $p\mid d_{i}$, then
\[
g_{d}\left(\ell\right)=\prod_{p\mid\ell}\left(1-p\right)^{n_{d}(p)}
\]
and so
\begin{align*}
\sum_{k\geq1}\frac{h_{d}\left(k\right)}{k} & =\sum_{k\geq1}\frac{\overline{\chi}\left(k\right)\mu\left(k\right)}{k}\prod_{p\mid k}\left(1-\left(1-p\right)^{n_{d}(p)}\right)\\
 & =\prod_{p}\left(1-\frac{\overline{\chi}\left(p\right)}{p}\left(1-\left(1-p\right)^{n_{d}(p)}\right)\right)
\end{align*}
where the first product appearing above is over all primes $p$ dividing
$k$, and the second product is over all primes $p$. From the first
equality we see that the series on the left converges absolutely (and
is in fact finite) since only finitely many primes $p$ satisfy $n_{d}(p)\geq1$.
It is a basic fact of number theory \cite[Theorem 4.9]{montgomery_multiplicative_2006}
that the sum $L\left(1,\overline{\chi}\right)=\sum_{k\geq1}\overline{\chi}\left(k\right)/k$
converges and is nonzero, and since $h_{d}=\overline{\chi}(\mu*g_{d})$
we have $\overline{\chi}g_{d}=\overline{\chi}*h_{d}$ and therefore
\[
\sum_{k\geq1}\frac{\overline{\chi}\left(k\right)g_{d}\left(k\right)}{k}=\left(\sum_{k\geq1}\frac{\overline{\chi}\left(k\right)}{k}\right)\left(\sum_{k\geq1}\frac{h_{d}\left(k\right)}{k}\right).
\]
Moreover, since the components of $d$ are squarefree and $\varphi(p)=p-1$
for every prime $p$, we have
\[
\varphi\left(d\right)=\prod_{p}\left(p-1\right)^{n_{d}(p)}
\]
and therefore
\[
\mu\left(d\right)\overline{\chi}^{*}\left(d\right)w_{\chi,a/d}(c)=C_{a,\chi}\prod_{p}\gamma\left(p,n_{d}(p)\right)
\]
where $C_{a,\chi}$ is nonzero and depends only on $a$ and $\chi$
and
\[
\gamma\left(p,k\right):=\frac{1}{\left(p-1\right)^{k}}\left(1-\frac{\overline{\chi}\left(p\right)}{p}\right)+\left(-1\right)^{k}\frac{\overline{\chi}\left(p\right)}{p}.
\]

Now we find an expression for $v_{\chi,a}\left(c\right)$. We have
\[
v_{\chi,a}\left(c\right)=C_{a,\chi}\sum_{d\mid a'}\prod_{p}\gamma\left(p,n_{d}(p)\right)=C_{a,\chi}\sum_{t}N\left(t\right)\prod_{p}\gamma\left(p,t_{p}\right),
\]
where the sum on the right hand side is over all tuples of nonnegative
integers $t=\left(t_{2},t_{3},t_{5},\ldots\right)$ indexed by the
primes, $a'=(a_{1}',\ldots,a_{m}')$ where $a_{i}'$ is the largest
squarefree divisor of $a_{i}$ coprime to $f_{i}$ for $i=1,2,\ldots,m$,
and $N\left(t\right)$ counts the number of $d\mid a'$ such that
$n_{d}(p)=t_{p}$ for all primes $p$. For a given tuple $t$, we
have
\[
N\left(t\right)=\prod_{p}\binom{n_{a'}(p)}{t_{p}},
\]
thus
\[
v_{\chi,a}\left(c\right)=C_{a,\chi}\sum_{t}\prod_{p}\binom{n_{a'}(p)}{t_{p}}\gamma\left(p,t_{p}\right)=C_{a,\chi}\prod_{p}\left(\sum_{k\geq0}\binom{n_{a'}(p)}{k}\gamma\left(p,k\right)\right).
\]
For a given prime $p$, by the binomial theorem, the inner sum is
equal to 1 if $n_{a'}(p)=0$, and otherwise equal to
\[
\left(1-\frac{\overline{\chi}\left(p\right)}{p}\right)\left(1+\frac{1}{p-1}\right)^{n_{a'}(p)}.
\]
So we conclude that
\[
v_{\chi,a}\left(c\right)=C_{a,\chi}\prod_{\substack{p\\
n_{a'}(p)\geq1
}
}\left(1-\frac{\overline{\chi}\left(p\right)}{p}\right)\left(\frac{p}{p-1}\right)^{n_{a'}(p)}\neq0.
\]
\end{proof}

\section{Acknowledgments}

This project was initiated by the author at McGill University under
the supervision of Bruce Shepherd, and the author wishes to thank
him for his valuable feedback and direction. The author also thanks
Andr\'as Seb\H{o} for his comments on this work. Finally, the author
greatly appreciates the detailed suggestions and references provided
by the anonymous referees.

\bibliographystyle{plain}
\bibliography{terminal_lemma}

\end{document}